\newcommand{\Aff}{{\mathbb A}}
\newcommand{\C}{{\mathbb C}}
\newcommand{\eps}{{\varepsilon}}
\newcommand{\F}{{\mathbb F}}
\newcommand{\Gm}{{\mathbb G}_m}
\newcommand{\OO}{{\mathcal O}}
\newcommand{\I}{{\rm{I}}}
\newcommand{\isom}{\cong}
\newcommand{\la}{\lambda}
\newcommand{\Reg}{\operatorname{Reg}}
\newcommand{\PSL}{\operatorname{PSL}}
\newcommand{\NS}{\operatorname{NS}}
\newcommand{\PP}{{\mathbb P}}
\newcommand{\Q}{{\mathbb Q}}
\newcommand{\Qbar}{{\overline{\mathbb Q}}}
\newcommand{\Fbar}{{\overline{\mathbb F}}}
\newcommand{\rank}{{\operatorname{rank}}}
\newcommand{\Z}{{\mathbb Z}}
\newtheorem{Proposition}{Proposition}[section]
\newtheorem{Theorem}[Proposition]{Theorem}
\newtheorem{Corollary}[Proposition]{Corollary}
\newtheorem{Lemma}[Proposition]{Lemma}
\theoremstyle{definition}
\newtheorem{Remark}[Proposition]{Remark}
\begin{document}
\date{26th April 2018}
\title[Congruences of elliptic curves]
{Explicit moduli spaces for \\ congruences of elliptic curves}

\author{Tom~Fisher}
\address{University of Cambridge,
          DPMMS, Centre for Mathematical Sciences,
          Wilberforce Road, Cambridge CB3 0WB, UK}
\email{T.A.Fisher@dpmms.cam.ac.uk}

\keywords{elliptic curves, Galois representations, elliptic surfaces}
\subjclass[2010]{11G05, 11F80, 14J27}

\renewcommand{\baselinestretch}{1.1}
\renewcommand{\arraystretch}{1.3}

\renewcommand{\theenumi}{\roman{enumi}}

\begin{abstract}
  We determine explicit birational models over $\Q$ for the modular
  surfaces parametrising pairs of $N$-congruent elliptic curves in all
  cases where this surface is an elliptic surface.  In each case we
  also determine the rank of the Mordell-Weil lattice and the
  geometric Picard number.
\end{abstract}

\maketitle

\section{Introduction}

Let $N \ge 2$ be an integer.  A pair of elliptic curves are said to be
{\em $N$-congruent}, if their $N$-torsion subgroups are isomorphic as
Galois modules.  Such an isomorphism raises the Weil pairing to the
power $\eps$ for some $\eps \in (\Z/N\Z)^\times$. In this situation we
say that the $N$-congruence has {\em power $\eps$}. Since
multiplication-by-$m$ on one of the elliptic curves (for $m$ an
integer coprime to $N$) changes $\eps$ to $m^2 \eps$, we are only ever
interested in the class of $\eps \in (\Z/N\Z)^\times$ mod squares.

Let $Z(N,\eps)$ be the surface that parametrises pairs of
$N$-congruent elliptic curves with power $\eps$. This is a surface
defined over $\Q$. We only consider $Z(N,\eps)$ up to birational
equivalence. Kani and Schanz \cite[Theorem 4]{KS} classified the
geometry of these surfaces, explicitly determining the pairs
$(N,\eps)$ for which $Z(N,\eps)$ is birational over $\C$ to either (i)
a rational surface, (ii) an elliptic $K3$-surface, (iii) an elliptic
surface with Kodaira dimension one (also known as a properly elliptic
surface), or (iv) a surface of general type.  In case (i) it is known
that the surface is rational over $\Q$.  We show in cases (ii) and
(iii) that the surface is birational over $\Q$ to an elliptic surface,
determining in each case a Weierstrass equation for the
generic fibre as an elliptic curve over $\Q(T)$.  One application of
these explicit equations is that we are then able to use the methods
of van Luijk and Kloosterman to compute the geometric Picard number of
each surface.

The problem of computing $Z(N,\eps)$ is closely related to that of
computing the modular curves $X_E(N,\eps)$ parametrising the elliptic
curves $N$-congruent (with power $\eps$) to a given elliptic curve
$E$. Equations for $X_E(N,\eps)$, and the family of curves it
parametrises, have been determined as follows. The cases $N \le 5$
were treated by Rubin and Silverberg \cite{RubinSilverberg},
\cite{RubinSilverberg2}, \cite{Silverberg} for $\eps = 1$, and by
Fisher \cite{g1hess}, \cite{enqI} for $\eps \not= 1$. The case $N=7$
was treated by Halberstadt and Kraus \cite{HK} for $\eps = 1$, and by
Poonen, Schaefer and Stoll \cite{PSS} for $\eps \not= 1$. The case
$N=8$ was treated by Chen \cite{Chen8}, and the cases $N=9$ and $N=11$
by Fisher \cite{7and11}, \cite{congr9}.

If $N$ is not a prime power, then in principle we obtain equations for
$X_E(N,\eps)$ as a fibre product of modular curves of smaller level.
Equations that are substantially better than this have been obtained
in the case $(N,\eps) = (6,1)$ by Rubin and
Silverberg~\cite{RubinSilverberg6}, and independently Papadopoulos
\cite{Pap6}, and in the cases $(N,\eps) = (12,1)$ and $(12,7)$ by Chen
\cite[Chapter 7]{ChenThesis}.  Chen also gives equations for
$X_E(N,\eps)$ in the cases $(N,\eps) = (6,5)$ and $(10,1)$.

The equations for $X_E(N,\eps)$ do immediately give us equations for
$Z(N,\eps)$, but unfortunately this does not always make it easy to
find the elliptic fibrations.  The main purpose of this note is to
record the transformations that work in each case.

According to \cite[Theorem 4]{KS} the surface $Z(N,\eps)$ is rational
over $\C$ for all $N \le 5$, and in the cases $N = 6,7,8$ with
$\eps = 1$. In each of these cases $Z(N,\eps)$ is rational over $\Q$,
as follows (see \cite[Chapter 8]{ChenThesis}) from the results cited
above.

In our terminology, it is part of the definition of an elliptic
surface that it has a section. As we describe below, some of the cases
in the next two theorems were already treated in \cite{Chen8},
\cite{congr9}, \cite{K}.

\begin{Theorem}
\label{thm1}
The surfaces $Z(N,\eps)$ that are birational over $\C$ to an elliptic
$K3$-surface, are in fact birational over $\Q$ to an elliptic surface.
The generic fibres are the elliptic curves over $\Q(T)$ with the
following Weierstrass equations.
\begin{align*}
&Z(6,5): & 
&y^2 + 3 T (T - 2)xy  + 2 (T - 1) (T + 2)^2  (T^3 - 2)y
= x^3 
    -6 (T - 1) (T^3 - 2) x^2, \\
&Z(7,3): &
&y^2 = x^3 +  (4 T^4 + 4 T^3 -  51 T^2 - 2 T  - 50) x^2 
   + (6 T + 25) (52 T^2 - 4 T + 25) x, \\
&Z(8,3): &
&y^2 = x^3 - (3 T^2 - 7) x^2 - 4 T^2 (4 T^4 - 15) x 
    + 4 T^2 (53 T^4 + 81 T^2 + 162), \\
&Z(8,5): &
&y^2 = x^3 - 2(T^2 + 19) x^2 - (4 T^2 - 49) (T^4 - 6 T^2 + 25) x, \\
&Z(9,1): &
&y^2 +  (6 T^2 + 3 T + 2)xy + T^2 (T + 1) (4 T^3 + 9 T + 9) y
   \\ &&& \hspace{13em} = x^3 - (16 T^4 + 12 T^3 + 9 T^2 + 6 T + 1)x^2, \\
&Z(12,1): &
&y^2 + 2  (5 T^2 + 9) xy  + 96 (T^2 + 3) (T^2 + 1)^2 y 
   = x^3 + (T^2 + 3) (11 T^2 + 1)x^2.
\end{align*}
\end{Theorem}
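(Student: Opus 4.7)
The plan is, for each pair $(N,\eps)$ listed, to start from the equations for $X_E(N,\eps)$ and its universal family of congruent curves as provided in the papers cited in the introduction. Letting $j(E)$ vary produces a surface birational to $Z(N,\eps)$; the task is then to exhibit an elliptic fibration on this surface over $\Q$ and to read off a Weierstrass equation for the generic fibre over $\Q(T)$.

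The first step is to choose a rational parameter $T$ on $Z(N,\eps)$. The ``default'' fibration $Z(N,\eps) \to X(1)$ sending $(E,E') \mapsto j(E)$ has fibre $X_E(N,\eps)$, which is usually not the elliptic fibration we want --- for several of the cases in the theorem $X_E(N,\eps)$ has genus $0$. Instead, one exploits an intermediate modular quotient. In each non-prime-level case one factors the modular covering $Z(N,\eps) \to X(1)$ through a modular curve $C$ of level strictly smaller than $N$, and pulls back a Hauptmodul on $C$ to serve as $T$; the fibres of $Z(N,\eps) \to C$ then parametrise the residual level-$N$ data and have geometric genus one. For the prime case $(7,3)$ one must instead search directly among pencils of curves on the projective model of $Z(7,3)$ derived from \cite{PSS} for one whose generic member has arithmetic genus one.

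The second step is to identify a $\Q$-rational section, since a section is required by our definition of elliptic surface. In each case such a section is provided by the moduli interpretation (for instance, specialisations where $(E,E')$ are related by an isogeny defined over $\Q$, or where one curve is a fixed twist of the other). Taking this section as the zero of the group law, one puts the generic fibre into a long Weierstrass equation over $\Q(T)$ by the usual sequence of substitutions, clears denominators, and minimises using Tate's algorithm to arrive at the listed equations.

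The main obstacle is the first step. The direct equations for $Z(N,\eps)$ coming from $X_E(N,\eps)$ make no particular elliptic fibration apparent, and a careless choice of $T$ yields either a generic fibre of the wrong genus or a Weierstrass equation of prohibitive complexity that would be useless for the later Picard-number computations. Identifying the right $T$ in each of the six cases --- the ``transformations that work'' referred to in the introduction --- is the substantive content of the proof; once $T$ is fixed, the remaining manipulations are routine. Correctness of each final equation can be checked a posteriori by computing the $j$-invariants of the two elliptic curves corresponding to a generic point of the fibre, verifying the expected $N$-congruence with power $\eps$, and confirming that the Euler numbers of the singular fibres sum to $24$, as required for an elliptic K3-surface.
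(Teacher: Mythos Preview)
Your proposal is a strategy sketch rather than a proof: it never identifies the parameter $T$ in any case and never produces a single Weierstrass equation, which is exactly the content of the theorem. Beyond that, two of your working assumptions are contradicted by what actually happens.

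For $(N,\eps)=(12,1)$, the natural genus-one fibration obtained from Chen's equations for $X_E(12,1)$ (after quotienting by the twisting $\Gm$-action) turns out to have \emph{no} $\Q$-rational section: the fibres with $T>0$ have no real points. So your claim that ``such a section is provided by the moduli interpretation'' fails here. The paper must locate a second, unrelated fibration admitting a section, and it does so by recognising the first fibration's quartic model as the discriminant of a quadratic in $T$, thereby extracting a new cubic-in-$\PP^2$ model with an obvious rational point. Nothing in your outline anticipates this obstruction or this fix.

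For $(N,\eps)=(6,5)$, the paper's route is quite different from ``factor through a modular curve of smaller level and pull back a Hauptmodul''. It introduces an auxiliary K3-surface $Z^*(3,2)$ --- the double cover of $Z(3,2)$ on which the ratio of discriminants is a square --- realises it explicitly as a double cover of $\PP^2$ branched over a union of two cuspidal cubics, and then exhibits $Z(6,5)$ as a degree-$3$ cover of $Z^*(3,2)$. The parameter $T$ indexes the tangent lines to one of the branch cubics; restricting the sextic branch locus to such a tangent line and cancelling the squared factor yields the desired genus-one fibre. That this degree-$3$ cover is the correct one (i.e.\ encodes the $2$-congruence) is not automatic and requires a separate verification via an explicit criterion for $2$-congruence in terms of $j$-invariants. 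Your proposal contains none of this structure.
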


\begin{Theorem}
\label{thm2}
The surfaces $Z(N,\eps)$ that are birational over $\C$ to a properly
elliptic surface, are in fact birational over $\Q$ to an elliptic
surface.  The generic fibres are the elliptic curves over $\Q(T)$ with
the following Weierstrass equations.
\begin{align*}
&Z(8,7): &
&y^2 = x^3 + 2(4 T^6 - 15 T^4 + 14 T^2 - 1) x^2 
+ (T^2 - 1)^4 (16 T^4 - 24 T^2 + 1) x, \\
&Z(9,2): & 
&y^2 + 3( 4 T^3 + T^2 - 2)xy + (T - 1)^3 (T^3 - 1) (4 T^3 - 3 T - 7)y
\\ &&& \hspace{7em} = x^3 -3 (T + 1) (T^3 - 1) (9 T^2 + 2 T + 1)x^2, 
\end{align*}
\begin{align*}
&Z(10,1): &
&y^2 -(3 T - 2) (6 T^2 - 5 T - 2)xy 
\\ &&& \hspace{3em}  -4 T^2 (T - 1)^2 (4 T^2 - 2 T - 1)
    (27 T^3 - 54 T^2 + 16 T + 12)y 
\\ &&& \hspace{7em}   = x^3 + T^2 (T - 1) (27 T^3 - 54 T^2 + 16 T + 12)x^2, \\
&Z(10,3): &
&y^2 + (T^3 - 8 T^2 - 9 T - 8)xy + 
    2 T^2 (T^3 - T^2 - 3 T - 3) (7 T^2 + 2 T + 3)y 
\\ &&& \hspace{7em}  = x^3 + 2 (3 T + 2) (T^3 - T^2 - 3 T - 3)x^2, \\
&Z(11,1): & 
&y^2 + (T^3 + T)xy = x^3 - 
      (4 T^5 - 17 T^4 + 30 T^3 - 18 T^2 + 4)x^2 
\\ &&& \hspace{14em}   
 + T^2 (2 T -1) (3 T^2 - 7 T + 5)^2 x.
\end{align*}
\end{Theorem}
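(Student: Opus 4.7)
The strategy mirrors that used in Theorem~\ref{thm1}, taking as input existing equations for $X_E(N,\eps)$ together with the universal family of curves it parametrises: Chen~\cite{Chen8},~\cite{ChenThesis} for $N=8$ and $10$, and Fisher~\cite{7and11},~\cite{congr9} for $N=9$ and $11$. Combining these with $j(E)$ produces, for each $(N,\eps)$ in the theorem, an explicit birational model of $Z(N,\eps)$ over $\Q$ as a surface fibered over the $j$-line of $E$, the generic fibre being $X_E(N,\eps)$. The task is then to exhibit a second fibration on this model which is elliptic.

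By \cite[Theorem 4]{KS} each of the five surfaces has Kodaira dimension one; hence its Iitaka fibration is the unique elliptic fibration up to isomorphism, so essentially a single pencil must be located. The plan is to search within the function field of the above model for a rational function $T$ whose level sets are smooth of geometric genus one. Natural candidates are assembled from the two projections onto the $j$-lines, from the diagonal locus $\{E \isom E'\}$ (when $\eps=1$), from the fixed locus of the swap involution $(E,E')\leftrightarrow (E',E)$ (available whenever $\eps \equiv \eps^{-1}$ modulo squares, which holds for every pair in the theorem), and from loci along which one of the curves acquires complex multiplication or degenerates.

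Once $T$ is chosen, a $\Q(T)$-rational section has to be identified to serve as origin; the obvious candidates come from the same list of natural $\Q$-rational loci on the surface. Standard Riemann--Roch/reduction procedures then put the generic fibre into Weierstrass form, and $\Q(T)$-changes of variable are applied to reach the clean shape displayed in the theorem. Verification is routine: compute the pair of $j$-invariants of the two $N$-congruent elliptic curves on the Weierstrass model as elements of $\Q(T)$, and check that the resulting pair $(j_1(T),j_2(T))$ parametrises $X_E(N,\eps)$ according to the equations already in the literature — a polynomial identity over $\Q(T)$ that can be dispatched on a computer.

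The principal obstacle is the search in the second paragraph: there is no canonical recipe for producing the pencil, and almost every naive choice yields fibres of the wrong genus. The search is guided by the expected configuration of reducible singular fibres, predictable from the discriminant of the modular fibration and from the cusp structure of $X_E(N,\eps)$, and by the expected Mordell--Weil rank, which the author will subsequently compute. A secondary concern is rationality of the chosen section, but in each of the five cases enough $\Q$-rational curves are visible on the modular model — diagonals, CM-fibres, swap-fixed curves, or reducible fibres with a $\Q$-rational component — to furnish a section defined over $\Q$ rather than merely over a quadratic extension.
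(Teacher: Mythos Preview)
Your plan matches the paper's approach for three of the five cases: $(8,7)$ and $(9,2)$ are handled in \cite{Chen8} and \cite{congr9} exactly by starting from $X_E(N,\eps)$, and for $(11,1)$ the paper likewise begins with the equations for $X_E(11,1)$ from \cite{7and11}. Note, however, that in the $(11,1)$ case the passage to the elliptic pencil is not a search guided by the heuristics you list (diagonals, swap-fixed loci, CM fibres); it is an ad hoc sequence of explicit weighted substitutions together with a Gr\"obner basis saturation, eventually collapsing the model to an intersection of two quadrics in $\PP^3$ over $\Q(T)$ with an obvious rational point.

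For $(10,1)$ and $(10,3)$ the paper takes a genuinely different route. Rather than working with $X_E(10,\eps)$, it introduces the double cover $Z^*(5,\eps)$ of $Z(5,\eps)$ parametrising pairs whose discriminant ratio is a square, and realises this K3-surface concretely as a double cover of $\PP^2$ branched over the union of two cuspidal cubics $F_{+}F_{-}=0$. Since $2$-congruent elliptic curves have square discriminant ratio, $Z(10,\eps)$ is a degree-$3$ cover of $Z^*(5,\eps)$; the paper identifies this cubic extension with the pencil of tangent lines to the cuspidal cubic $F_{+}=0$. Restricting the double cover to such a tangent line produces, after cancelling the squared factor coming from tangency, a binary quartic with a rational linear factor, hence a pointed genus-one curve over $\Q(T)$. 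This is what yields the Weierstrass equations for $Z(10,1)$ and $Z(10,3)$, and a separate check via Lemma~\ref{lem:congr2} confirms that the cubic extension so constructed is the one coming from $2$-congruence. Your direct-search strategy on $X_E(10,\eps)$ might succeed in principle, but the paper's method replaces the search by a structural argument that simultaneously explains where the fibration and the section come from.
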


Although we have not made it formally part of the statements of
Theorems~\ref{thm1} and~\ref{thm2}, our methods do also give the
moduli interpretations of these surfaces. In other words, given a
point on one of these surfaces (away from a certain finite set of
curves) we can determine the corresponding pair of $N$-congruent
elliptic curves. The fact that $N$-congruent elliptic curves over $\Q$
have traces of Frobenius (at all primes of good reduction) that are
congruent mod $N$, then provides some very useful check on our
calculations.

The second part of the following corollary was conjectured by Kani and
Schanz \cite[Conjecture 5]{KS}, and its proof (for $\eps = 1$) was
completed by Zexiang Chen in his PhD thesis \cite{ChenThesis}.  For
$N$ sufficiently large it is expected (with variants of this
conjecture variously attributed to Frey, Mazur, Kani and Darmon) that
the conclusions of the corollary are false.

\begin{Corollary}
  Let $N \le 12$ and $\eps \in (\Z/N\Z)^\times$ with $\eps =1$ if
  $N=11$ or $12$.
\begin{enumerate}
\item There are infinitely many pairs of non-isogenous elliptic curves
  over $\Q(T)$ that are $N$-congruent with power $\eps$.
\item There are infinitely many pairs of non-isogenous elliptic curves
  over $\Q$ that are $N$-congruent with power $\eps$.
\end{enumerate}
Moreover the $j$-invariants $j_1$ and $j_2$ of the elliptic curves in
(i) (resp. (ii)) correspond to infinitely many curves (resp. points)
in the $(j_1,j_2)$-plane.
\end{Corollary}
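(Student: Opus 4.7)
The plan is to deduce both parts of the corollary from the explicit models of Theorems~\ref{thm1} and~\ref{thm2}, together with the known rational parameterizations in the remaining cases, by exhibiting infinitely many $\Q(T)$- and $\Q$-points on $Z(N,\eps)$ that avoid the isogenous locus.

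In the rational cases (so $N \le 5$ or $(N,\eps) \in \{(6,1),(7,1),(8,1)\}$) the surface $Z(N,\eps)$ is $\Q$-rational, so there is a dominant $\Q$-rational map from $\Aff^2$ to $Z(N,\eps)$. The isogenous locus on $Z(N,\eps)$ is a countable union of Hecke-type curves, one for each integer $m$ coprime to $N$, together with the diagonal $j_1=j_2$. Fixing one coordinate of $\Aff^2$ in $\Q$ and letting the other be $T$ produces a pair of $N$-congruent elliptic curves over $\Q(T)$; a thin-set argument in the style of Hilbert irreducibility shows that for all but a thin set of values of the specialized coordinate the resulting pair is non-isogenous, which gives part~(i). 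Specializing the remaining coordinate then yields part~(ii). The assertion about the $(j_1,j_2)$-plane follows because the map $(E_1,E_2)\mapsto(j(E_1),j(E_2))$ from $Z(N,\eps)$ to $\Aff^2$ is dominant with generically finite fibres.

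For the elliptic surface cases of Theorems~\ref{thm1} and~\ref{thm2}, the crucial input is the positivity of the Mordell-Weil rank of the generic fibre $\mathcal{E}/\Q(T)$ of the given elliptic fibration; this is established case by case later in the paper. Given a section $P\in\mathcal{E}(\Q(T))$ of infinite order, the multiples $nP$ yield infinitely many distinct $\Q(T)$-points of $Z(N,\eps)$ and hence infinitely many pairs $(E_1^{(n)},E_2^{(n)})$ of $N$-congruent elliptic curves over $\Q(T)$. For each $m$ coprime to $N$, the image of the section $nP$ lies in the Hecke correspondence $T_m\subset Z(N,\eps)$ for at most finitely many $n$, since the canonical heights of $nP$ on $\mathcal{E}$ grow quadratically in $n$ while those of sections landing in $T_m$ are bounded in terms of $m$; combining this with an isogeny-degree bound in the spirit of Masser-W\"ustholz controls the $m$'s that can occur, and establishes that only finitely many $nP$ give isogenous pairs. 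Part~(i) follows, and the $(j_1,j_2)$-plane statement again uses generic finiteness of the map to the $j$-plane. For part~(ii), Silverman's specialization theorem shows that for all but finitely many $t_0\in\Q$ the specialization of $P$ at $T=t_0$ remains of infinite order on the fibre $\mathcal{E}_{t_0}$, producing a pair of $N$-congruent elliptic curves over $\Q$; excluding the thin set of $t_0$ on which the specialized pair becomes isogenous, or on which $(j_1(t_0),j_2(t_0))$ coincides with a previously realized pair, still leaves infinitely many good $t_0$.

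The principal obstacle is the positivity of the Mordell-Weil rank for each of the ten elliptic fibrations appearing in Theorems~\ref{thm1} and~\ref{thm2}: one must exhibit an explicit non-torsion section for every Weierstrass equation above, and certify its non-triviality using Tate's algorithm on the reducible fibres combined with the Shioda-Tate formula, or via a direct height calculation. This is a genuine case-by-case computation that relies on the explicit models; once it is in hand, the remainder of the argument is formal.
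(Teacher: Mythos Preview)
Your overall strategy matches the paper's: exhibit a $\Q$-rational section of infinite order on each of the elliptic surfaces in Theorems~\ref{thm1} and~\ref{thm2} (these are listed in Table~\ref{tabMW}), take multiples to get infinitely many $\Q(T)$-points on $Z(N,\eps)$, and then specialise $T$ for part~(ii). The paper's own proof is equally brief and defers the non-isogeny and $(j_1,j_2)$-density verifications to the proof of \cite[Theorem~1.5]{congr9}.

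Your mechanism for excluding the isogenous locus, however, does not work as written. Over $\Q(T)$ you correctly observe that for each fixed $m$ only finitely many section curves $C_n$ can lie in $T_m$, and then invoke a Masser--W\"ustholz-type bound to control the $m$'s that occur. But an isogeny bound of that kind controls the minimal degree $m$ only in terms of the heights of $E_1^{(n)},E_2^{(n)}$, and those heights grow with $n$; so the allowed $m$ grows with $n$ as well, and the two finiteness statements do not combine to give a conclusion. The argument that does work is that if $(E_1^{(n)},E_2^{(n)})$ over $\Q(T)$ have non-constant $j$-invariants and are cyclically $m$-isogenous, then $T\mapsto(j_1^{(n)}(T),j_2^{(n)}(T))$ is a non-constant map from $\PP^1$ to $X_0(m)$, forcing $X_0(m)$ to have genus~$0$; this bounds $m$ absolutely, and then for each of the finitely many such $m$ the curve $\{\Phi_m(j_1,j_2)=0\}$ on $Z(N,\eps)$ has only finitely many irreducible components, each coinciding with at most one $C_n$. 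The same issue affects your thin-set arguments in the rational cases and in the specialisation step for part~(ii): a countable union of thin (or finite) subsets of $\Aff^1(\Q)$ need not be thin, and since $\Q$ is countable this is not a vacuous worry. Over $\Q$ one instead uses that rational cyclic isogenies have absolutely bounded degree (Mazur--Kenku), so once a single non-isogenous pair over $\Q(T)$ is in hand, only finitely many specialisations $t_0\in\Q$ can produce an isogenous pair.
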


\begin{proof} In Table~\ref{tabMW} we list at least one $\Q$-rational
  section of infinite order for each of the elliptic surfaces in
  Theorems~\ref{thm1} and~\ref{thm2}.  This proves the first part.
  The second part follows by specialising $T$.  See the proof of
  \cite[Theorem~1.5]{congr9} for further details.  The final sentence
  of the statement is included to guard against various ``cheat''
  proofs, where new examples are generated from old by taking
  quadratic twists, or making substitutions for $T$.
\end{proof}

\begin{Remark} If elliptic curves $E_1$ and $E_2$ are $N$-congruent
  with power $\eps = -1$, then the quotient of $E_1 \times E_2$ by the
  graph of the $N$-congruence is a principally polarised abelian
  surface. The surface $Z(N,-1)$ may then be interpreted as a Hilbert
  modular surface, parametrising degree $N$ morphisms from a genus $2$
  curve to an elliptic curve.  At the outset of our work, this moduli
  interpretation had not been made explicit for any $N > 5$.
  Remarkably however, this approach has been used by A. Kumar \cite{K}
  to independently obtain results equivalent to the first two parts of
  Theorem~\ref{thm1} and the first three parts of
  Theorem~\ref{thm2}. As far as we are aware, his methods do not
  generalise to $\eps \not= -1$.
\end{Remark}

In Tables~\ref{tab1} and~\ref{tab2} we record some further data
concerning the elliptic surfaces in Theorems~\ref{thm1}
and~\ref{thm2}.  Since a K3-surface may admit many elliptic
fibrations, the data in Table~\ref{tab1} comes with the caveat that it
relates to the elliptic fibration we happened to find in
Theorem~\ref{thm1}. Since a properly elliptic surface has a unique
elliptic fibration, there is no such caveat for Table~\ref{tab2}.  We
list in each case the Kodaira symbols of the singular fibres (with
bracketing to indicate fibres that are Galois conjugates), the order
of the torsion subgroup over $\Q(T)$, the ranks of the group of sections over
$\Q(T)$ and $\Qbar(T)$, and finally the geometric Picard number
$\rho$. The lower bounds on the ranks are immediate from the
independent sections of infinite order listed in Tables~\ref{tabMW}
and~\ref{tabMWgeom}. The upper bounds on the ranks, and the geometric
Picard numbers are justified in Section~\ref{sec:pic}.

\begin{table}[ht]
\caption{The elliptic K3-surfaces in Theorem~\ref{thm1}}
\label{tab1}
$\begin{array}{llcccc}
(N,\eps) & \multicolumn{1}{c}{\text{singular fibres}} 
& |{\rm tors}| & \rank/\Q & \rank/\Qbar & \rho \\ \hline
(6,5) &  (\I_2, \I_2), \I_3,  (\I_3, \I_3, \I_3), \I_4, \I_4 & 1 & 2 & 2 & 20 \\ 
(7,3) & \I_1, \I_2,  (\I_2, \I_2),  (\I_2, \I_2), \I_3, \I_{10} & 2 & 2 & 2 & 20 \\ 
(8,3) &  (\I_1, \I_1), \I_2,  (\I_2, \I_2),  (\I_2, \I_2),  (\I_3, \I_3), \I_0^* & 1 & 4 & 5 & 20 \\ 
(8,5) & \I_2, \I_2,  (\I_2, \I_2),  (\I_2, \I_2),  (\I_3, \I_3), \I_0^* & 2 & 2 & 4 & 20 \\ 
(9,1) &  (\I_1, \I_1, \I_1), \I_2,  (\I_2, \I_2, \I_2), \I_3, \I_4, \I_0^* & 1 & 3 & 4 & 19 \\ 
(12,1) &  (\I_1, \I_1, \I_1, \I_1, \I_1, \I_1, \I_1, \I_1),  (\I_4, \I_4),  (\I_4, \I_4) & 1 & 3 & 5 & 19 
\end{array}$
\end{table}

\begin{table}[ht]
\caption{The properly elliptic surfaces in Theorem~\ref{thm2}}
\label{tab2}
$\begin{array}{llcccc}
(N,\eps) & \multicolumn{1}{c}{\text{singular fibres}} 
& |{\rm tors}| & \rank/\Q & \rank/\Qbar & \rho \\ \hline
(8,7) & \I_2,  (\I_2, \I_2),  (\I_2, \I_2),  (\I_3, \I_3), \I_4, \I_8, \I_8 & 2 & 1 & 2 & 30 \\ 
(9,2) &  (\I_2, \I_2, \I_2),  (\I_3, \I_3),  (\I_3, \I_3, \I_3), \I_9, \I_0^* & 1 & 2 & 2 & 29 \\ 
(10,1) &  (\I_2, \I_2),  (\I_2, \I_2),  (\I_3, \I_3, \I_3), \I_5, \I_{10}, {\rm{IV}} & 1 & 1 & 1 & 28 \\ 
(10,3) &  (\I_1, \I_1, \I_1), \I_2,  (\I_2, \I_2),  (\I_2, \I_2),  (\I_3, \I_3, \I_3), \I_4, \I_4, \I_6 & 1 & 3 & 4 & 28 \\ 
(11,1) &  (\I_1, \I_1, \I_1), \I_2,  (\I_2, \I_2, \I_2), \I_3, \I_4,  (\I_4, \I_4), \I_{10} & 2 & 2 & 2 & 28 
\end{array}$ 
\end{table}

\begin{table}[ht]
\caption{Mordell-Weil generators over $\Q(T)$}
\label{tabMW}
$\begin{array}{ll}
(N,\eps) & \multicolumn{1}{c}{\text{ $x$-coordinates of 
independent sections of infinite order}} \\ \hline
(6,5) &    0,\,\, 2 T^4 - 4 T, \\ 
(7,3) &    4 T^2 + 20 T + 25,\,\, 6 T + 25, \\
(8,3) &    -7,\,\, -T^2 + 9,\,\, -4 T^2 - 6 T,\,\, (4 T^5 - 2 T^4 + 10 T^3 + 6 T^2 + 18 T)/( T - 1)^2, \\
(8,5) &    -4 T^2 + 49,\,\, 2 T^3 + 19 T^2 + 60 T + 63, \\
(9,1) &    0,\,\, 4 T^4 + 2 T^3 - 2 T^2,\,\, 4 T^4 + 4 T^3 + 9 T^2 + 18 T + 9, \\ 
(12,1) &   0,\,\, -12 T^4 - 24 T^2 - 12,\,\, 4 T^6 + 12 T^4 - 4 T^2 - 12, \\
(8,7) &   4 T^6 + 4 T^5 - 9 T^4 - 10 T^3 + 4 T^2 + 6 T + 1, \\
(9,2) &   0,\,\, 2 T^5 - 8 T^3 + 4 T^2 + 6 T - 4, \\
(10,1) &   0, \\
(10,3) &   0,\,\, 2 T^5 - 4 T^4 - 4 T^3 + 6 T,\,\, 4 T^5 - 2 T^4 - 14 T^3 - 18 T^2 - 6 T, \\
(11,1) &   T^4 + 4 T^2 + 4,\,\, 3 T^2 - 7 T + 5.
\end{array}$ 
\end{table}

\begin{table}[ht]
\caption{Additional Mordell-Weil generators over $\overline{\Q}(T)$}
\label{tabMWgeom}
$\begin{array}{lcl}
(N,\eps) & d & \multicolumn{1}{c}{\text{ section of infinite order defined over $\Q(\sqrt{d})$}} \\ \hline
(8,3) & -2 & (-2 T^4 - 5 T^2 - 9, (2 T^6 + 5 T^4 + 20 T^2 + 9) \sqrt{-2}),  \\ 
(8,5) & -3 & (-2 T^3 + T^2 + 18 T - 35, (12 T^3 - 6 T^2 - 108 T + 210)\sqrt{-3}),  \\
(8,5) & -1 & (16 T^2 - 196,(8 T^4 - 346 T^2 + 3038)\sqrt{-1} ), \\
(9,1) & -3 & (-(19/3) T^4 - 15 T^3 - 9 T^2, \,\, \ldots \,\, ), \\
(12,1) & -3 & (-12 T^4 - 40 T^2 - 12, \,\, \ldots \,\, ), \\
(12,1) & -1 & (-16 T^4 - 64 T^2 - 48, \,\, \ldots \,\, ), \\
(8,7) & -3 & (-4 T^6 - 20 T^5 - 39 T^4 - 36 T^3 - 14 T^2 + 1,
\,\, \ldots \,\, ), \\
(10,3) & -3 & 
(-7 T^6 - 23 T^5 - 30 T^4 - 15 T^3 - 9 T^2, \,\, \ldots \,\, ).
\end{array}$ 
\end{table}

We organise the proofs of Theorems~\ref{thm1} and~\ref{thm2} as
follows.  The cases $N=8$ and $N=9$ were already treated in
\cite{Chen8}, \cite{congr9}, by starting from equations for
$X_E(N,\eps)$. In Section~\ref{sec:via} we use a similar approach to
treat the cases $(N,\eps) = (7,3)$, $(11,1)$ and $(12,1)$. Then in
Section~\ref{sec:S3} we treat the cases $(N,\eps) = (6,5)$, $(10,1)$
and $(10,3)$ by exhibiting $Z(N,\eps)$ as a degree $3$ cover of a
K3-surface.

The calculations described in this paper were carried out using Magma
\cite{Magma}. Accompanying Magma files 
are available from the author's website.  We assume that
the reader is familiar with the standard techniques for putting an
elliptic curve in Weierstrass form, as described in \cite[\S 8]{CaL},
or as implemented in Magma.

\section{Proofs via equations for $X_E(N,\eps)$}
\label{sec:via}

We prove Theorems~\ref{thm1} and~\ref{thm2} in the cases
$(N,\eps) = (7,3)$, $(11,1)$ and $(12,1)$.  The case
$(N,\eps) = (7,3)$ was treated in \cite[Section 8.2]{ChenThesis}, but
as this has not been published before, we include the details for
completeness.

\subsection*{Case ({\em N},\boldmath$\eps$) = (7,3)}
Let $E$ be the elliptic curve $y^2 = x^3 + ax + b$.  The following
equation for $X_E(7,3)$, as a quartic curve in $\PP^2$, was computed
by Poonen, Schaefer and Stoll \cite[Section 7.2]{PSS}, building on work of
Halberstadt and Kraus \cite{HK}.
\begin{align*}
F(a,b;x,y,z) = -a^2 & x^4 + 2 a b x^3 y - 12 b x^3 z - 6(a^3 + 6 b^2) x^2 y^2 
      + 6 a x^2 z^2 
\\ & + 2 a^2 b x y^3 - 12 a b x y^2 z + 18 b x y z^2 + (3 a^4 + 19 a b^2) y^4 
\\ & - 2(4 a^3 + 21 b^2) y^3 z + 6 a^2 y^2 z^2 - 8 a y z^3 + 3 z^4.
\end{align*}
Replacing $E$ by a quadratic twist does not change the isomorphism
class of $X_E(7,3)$. This is borne out by the identity
\[ F(\la^2 a,\la^3 b ; \la x,y, \la^2 z ) = \la^8 F(a,b;x,y,z). \]

The surface $Z(7,3)$ is the quotient of
$\{F = 0\} \subset \Aff^2 \times \PP^2$ by this $\Gm$-action.  We have
$F(y,x_1 y;x_2,T,y) = y^2 (c y^2 + h y - f)$ where
\begin{align*}
c &= (T^2 + 1) (3 T^2 - 8 T + 3), \\
h &= T^3 (19 T - 42) x_1^2 + 2 T (T - 3)^2 x_1 x_2 - 6 (T^2 - 1)x_2^2, \\
f &= 36 T^2 x_1^2x_2^2 - 2(T - 6) x_1 x_2^3 + x_2^4.
\end{align*}
Therefore $Z(7,3)$ is birational to the total space for the genus one
curve over $\Q(T)$ with equation $Y^2 = h^2 + 4 cf$. This is a double
cover of $\PP^1$ with a rational point above $(x_1:x_2) = (1:0)$.
Putting this elliptic curve in Weierstrass form, and replacing $T$ by
$(6T - 3)/(4T + 4)$, gives the equation in the statement of
Theorem~\ref{thm1}.

\subsection*{Case ({\em N},\boldmath$\eps$) = (11,1)}
Let $E$ be the elliptic curve $y^2 = x^3 + ax + b$.  Equations for
$X_E(11,1)$ as a curve in $\PP^4$ were computed in
\cite[Theorem~1.2]{7and11}. These equations are the $4 \times 4$
minors of the $5 \times 5$ Hessian matrix of the cubic form
\begin{align*}
F(a,b;v,w,&x,y,z) = v^3 + a v^2 z - 2 a v x^2 + 4 a v x y - 6 b v x z + a v y^2 
    + 6 b v y z \\ & + a^2 v z^2 - w^3 + a w^2 z - 4 a w x^2 - 12 b w x z 
    + a^2 w z^2 - 2 b x^3 + 3 b x^2 y \\ & + 2 a^2 x^2 z + 6 b x y^2 
    + 4 a b x z^2 + b y^3 - a^2 y^2 z + a b y z^2 + 2 b^2 z^3.
\end{align*}
Replacing $E$ by a quadratic twist does not change the isomorphism
class of $X_E(11,1)$. This is borne out by the identity
\[ F(\la^2 a,\la^3 b;\la^2 v,\la^2 w,\la x,\la y,z) = \la^6
F(a,b;v,w,x,y,z). \]
We may describe $Z(11,1)$ as the quotient of a $3$-fold in
$\Aff^2 \times \PP^4$ by this $\Gm$-action.

We start by using the discriminant condition $4a^3 + 27b^2 \not=0$
to simplify the equations for $X_E(11,1)$. The polynomials
\begin{align*}
F_1 &= v z + 2 w z + x^2 - x y - y^2, \\
F_2 &=     a x z + b z^2 - v x + v y - 2 w x, \\
F_3 &=     a^2 z^2 + 2 a w z - 4 a x^2 - 12 b x z - 3 w^2, \\ 
F_4 &=     a^2 z^2 + 2 a v z - 2 a x^2 + 4 a x y + a y^2
        - 6 b x z + 6 b y z + 3 v^2, \\
F_5 &=     2 a^2 y z - a b z^2 - 4 a v x - 2 a v y - 6 b v z
        - 3 b x^2 - 12 b x y - 3 b y^2,
\end{align*}
are linear combinations of the derivatives of $F$, where the matrix
implicit in taking these linear combinations is invertible if
$4a^3 + 27b^2 \not=0$. Now $X_E(11,1)$ is defined by the $4 \times 4$
minors of the $5 \times 5$ Jacobian matrix ($M$ say) of
$F_1, \ldots, F_5$.

We make the substitutions
\begin{align*}
a &= (4 U + 3 x_3) x_4 - 3 x_5^2, \\
b &= x_2 (x_1 + x_3) x_4 - (4 U + 3 x_3) x_4 x_5 + 2 x_5^3, \\
(v,w,x,y,z) &= ( x_2 x_4 + x_4 x_5 + x_5^2, x_3 x_4 - x_5^2, x_5, x_4, 1 ). 
\end{align*}
We have $4a^3 + 27b^2 = x_4 h$ for some polynomial $h$.  We add $x_5$
times the first row of $M$ to the second row. We then divide all but
the first row by $x_4$. Let $I \subset \Q[U,x_1,x_2,x_3,x_4,x_5]$ be
the ideal generated by the $4 \times 4$ minors of $M$, and
\[ J = \{f \in \Q[U,x_1,x_2,x_3,x_4,x_5] : x_2 h f \in I \}. \]
Using the Gr\"obner basis machinery in Magma we find that 
$J \cap \Q[U,x_1,x_2,x_3,x_4]$ is generated by $3$ 
homogeneous polynomials of degree $4$. These define a surface
in $\PP^4$ of degree $12$. 
By the substitution 
\[ T = \frac{4 (2 x_1 - x_2 + x_3) U + (x_1 x_2 - x_2^2 + x_3 x_4)}
             {2 (2 x_4 - x_2) U + 2 (x_1 x_2 - x_2^2 + x_3 x_4)} \]
this surface is birational to the 
surface $\{Q_1 = Q_2 = 0\}$ in $\Aff^1 \times \PP^3$ where 
\begin{align*}
Q_1 &= 4 (T - 2) x_1 x_2 + 8 x_1 x_3 + (T - 2)^2 x_2^2
  \\ & \hspace{7em}  + 4 (T - 2) x_2 x_3 + 2 (T^2 - T + 1) x_2 x_4
   + 4 x_3^2 - 4 T x_3 x_4, \\ 
Q_2 &= 8 x_1^2 + 16 x_1 x_3 - 4 (2 T - 1) x_1 x_4 
                        - T (T - 2) x_2^2 - T^2 x_2 x_3
 \\ &  \hspace{7em} - 2 (T^2 - T + 1) x_2 x_4
                   - 2 (T - 4) x_3^2 - 2 (T^2 + 4 T - 1) x_3 x_4.
\end{align*}
These same equations define a genus one curve in $\PP^3$ defined over
$\Q(T)$, with a rational point at $(x_1: x_2: x_3: x_4) =
(0:0:0:1)$.
Putting this elliptic curve in Weierstrass form gives the equation in
the statement of Theorem~\ref{thm2}.

\subsection*{Case ({\em N},\boldmath$\eps$) = (12,1)}
Let $E$ be the elliptic curve $y^2 = x^3 + ax + b$.
Equations for $X_E(12,1)$ as a curve in $\PP^5$ were computed in
\cite[Theorem 1.7.10]{ChenThesis}. These equations 
are $F_0 = F_1 = F_2 = F_3 = 0$ where
\begin{align*}
 F_0 &= -X^2 Z + a X Y^2 + 6 b Y^3 - 6 a Y^2 Z - 12 Z^3, \\
 F_1 &= X^2 + 12 X Z + 36 Z^2 - 2 u_0 u_2 - u_1^2 + a u_2^2, \\
 F_2 &= 4 a X Y + 36 b Y^2 - 24 a Y Z - 2 u_0 u_1 + 2 a u_1 u_2 + b u_2^2, \\
 F_3 &=  8 a X Z - 4 a^2 Y^2 - u_0^2 + 2 b u_1 u_2. 
\end{align*}
These polynomials satisfy
\[ F_i(\la^2 a,\la^3 b;\la X,Y,\la Z,\la^2 u_0,\la u_1,u_2) =
\la^{m_i} F_i(a,b;X,Y,Z,u_0,u_1,u_2) \]
where $(m_0,m_1,m_2,m_3) = (3,2,3,4)$. Again, it is our aim to
quotient out by this $\Gm$-action. We do this by setting
$(X + 6 Z)Y = u_2^2$.  Specifically, we substitute
$(X,Y,Z,u_0,u_1,u_2) = (x^2 - 6 y,1,y,v x,w x,x)$ and then solve for
$a$ and $b$ so that the first two equations are satisfied.  In the
remaining two equations we substitute
\[ v = 2 (w - 2 y) y + \frac{T + 1}{T - 1} (x^2 (y + 1) - (w + 2 y)^2). \]
The resultant with respect to $w$ is $f(T) x^{14} y^{2} g(x,y)^2 
h(x,\widetilde{y})$ where $f(T)$ is a rational function in $T$, 
$g(x,y) = x^6 (y + 1) - 9 y^2 (x^2 + 4)^2$,
\begin{align*} 
h(x,y) = (T + 1)^2 & x^2 y^2 + (T + 2) (T^2 + 3)^2 x^2 \\
   & - 4 (T - 1) (T + 3)^2 x y + 4 (T + 3)^2 y^2 + 12 T (T + 1)^2 (T + 3)^2
\end{align*}
and $\widetilde{y} = (144 (T + 1) y + (T + 3)^2 ((T - 3) x^2 + 12 (T + 1)))/(8 (T + 3) x)$. 
Therefore $Z(12,1)$ is birational to the total space
for the genus one curve $C = \{h = 0\}$ in $\Aff^2$ defined 
over $\Q(T)$. Replacing $x$ by $2(T + 3)/(T^2 + 3)x$, and 
completing the square in $y$ shows that $C$ has equation
\begin{equation}
\label{g1fib}
 Y^2 = -(T + 2) x^4 - (4 T^3 + 5 T^2 + 6 T + 9) x^2 - 3 T (T^2 + 3)^2. 
\end{equation}
This gives a genus one fibration on $Z(12,1)$ defined over $\Q$, but
without a $\Q$-rational section. Indeed the fibres with $T > 0$ have
no real points.

We now find another genus one fibration that does have a $\Q$-rational
section.  Let $F(x_1,x_2,x_3)$ be the unique homogeneous polynomial of
degree $6$ with the property that $F(x,T,1)$ is the right hand side
of~\eqref{g1fib}.  Then $F$ is the discriminant of the 
following quadratic in $T$.
\[ x_1^2 x_2 + (T^2 + 2) x_1^2 x_3 + 2 T x_1 x_2^2 - 2 T x_1 x_2 x_3 +
T^2 x_2^3 + 3 x_2^2 x_3 + 3 T^2 x_2 x_3^2 + 9 x_3^3=0 \]
This same equation defines a genus one curve in $\PP^2$ defined over
$\Q(T)$, with a rational point at $(x_1: x_2: x_3) = (1:0:0)$. Putting
this elliptic curve in Weierstrass form gives the equation in the
statement of Theorem~\ref{thm1}.

\section{Degree $3$ covers of K3-surfaces}
\label{sec:S3}

We prove Theorems~\ref{thm1} and~\ref{thm2} in the cases
$(N,\eps) = (6,5)$, $(10,1)$ and $(10,3)$.  In the first of these
cases, Chen's equations for $X_E(6,5)$ already give a genus one
fibration on $Z(6,5)$, but one without a section.  The content of
Theorem~\ref{thm1} in this case is that we can find another genus one
fibration that does have a section.

For $N$ an odd integer, let $Z^*(N,\eps)$ be the double cover of
$Z(N,\eps)$ that param\-etrises pairs of elliptic curves whose ratio of
discriminants is a square.

\begin{Theorem}
\label{thm:Zstar}
If $(N,\eps) = (3,2)$, $(5,1)$ or $(5,2)$ then $Z^*(N,\eps)$ is a double
cover of $\PP^2$, ramified over the union of two cuspidal cubics, with 
equation 
\begin{equation} 
\label{dc}
y^2 = F_{+}(u,v,w) F_{-}(u,v,w) 
\end{equation} where 
\begin{align*}
&Z^*(3,2): & F_{\pm} &= u (u + 3 v \pm w)^2 + 4 v^3, \\
&Z^*(5,1): & F_{\pm} &= u(u^2 - 11 u v - v^2)+ w^2 (12 u + v)  
                      \pm 2 w (3 u^2 - 4 u v + 4 w^2), \\
&Z^*(5,2): & F_{\pm} &= u^2 (11 v + 8 w) + w^2 (8 u - v + 4 w) 
            \pm 2 u (2 v - w)(4u - v + 4w).
\end{align*}
\end{Theorem}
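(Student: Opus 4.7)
The plan is to construct an explicit universal pair $(E_1,E_2)$ of $N$-congruent elliptic curves (with power $\eps$) on each $Z(N,\eps)$ in question, compute the square class of the ratio $\Delta_1/\Delta_2$ of their discriminants, and identify it with the stated $F_+ F_-$. Since $N \le 5$ in all three cases, the surfaces $Z(3,2)$, $Z(5,1)$, $Z(5,2)$ are rational over $\Q$ (as noted in the introduction), so I may fix birational isomorphisms $Z(N,\eps) \dashrightarrow \PP^2$ and pick coordinates $(u:v:w)$.

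First I would recall from Rubin--Silverberg~\cite{RubinSilverberg} (for $\eps=1$) and Fisher~\cite{g1hess},~\cite{enqI} (for $\eps\neq 1$) the explicit defining equation of $X_E(N,\eps)$ together with the family of curves $E'$ that it parametrises. In each case $X_E(N,\eps)$ is a conic, so it admits a rational parametrisation over $\Q(E)$. Letting $E$ itself vary then produces a universal pair $(E_1,E_2)$ whose Weierstrass coefficients are rational functions of $(u,v,w)$, and in particular gives $\Delta_1,\Delta_2\in\Q(u,v,w)^\times$. By definition, $Z^*(N,\eps)\to Z(N,\eps)$ is then the double cover with equation $y^2 = \Delta_1\Delta_2$ modulo squares.

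The crux is to identify this square class with $F_+ F_-$. Computationally, I would extract a representative of $\Delta_1\Delta_2 \bmod (\Q(u,v,w)^\times)^2$ as a polynomial in $u,v,w$, and then search for a projective linear change of coordinates bringing it into the stated factored form. I expect the factorisation into two factors over $\Q$ to be forced by the involution of $Z(N,\eps)$ swapping $E_1\leftrightarrow E_2$: this involution swaps $\Delta_1$ and $\Delta_2$, so in coordinates diagonalising it, the square class of $\Delta_1\Delta_2$ should separate into a product of two conjugate factors visible in the stated formulas (note the near-symmetry $w\mapsto -w$ in the $(3,2)$ and $(5,1)$ cases). The main obstacle is precisely finding coordinates that simultaneously respect the $E_1\leftrightarrow E_2$ involution and force both factors of the square class to be defined over $\Q$ rather than over some quadratic extension; this is where experimentation is required, and the theorem records the successful choice.

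Finally, to verify that each $F_\pm = 0$ is a cuspidal cubic in $\PP^2$, I would compute the Jacobian ideal of each cubic, show that its zero locus is a single reduced point, and check that the tangent cone of $F_\pm$ at that point is a double line. These are routine polynomial manipulations in three variables, easily carried out in Magma.
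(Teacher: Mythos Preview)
Your overall strategy---build the universal pair of $N$-congruent curves, compute the square class of $\Delta_1/\Delta_2$, and identify it with $F_+F_-$---is exactly what the paper does. The paper, however, organises the computation much more tightly. Rather than starting from an arbitrary rational parametrisation of $Z(N,\eps)$ and then searching, it uses the Klein-form covariants $D,A,B$ of $E$ (taken from \cite{g1hess}, \cite{enqI}) so that the second curve is $y^2=x^3+Ax+B$ and the syzygy $\pm 4A^3\pm 27B^2 = c\,\Delta^k D^m$ hands you the square class of the discriminant ratio directly. The good coordinates $(u,v,w)$ are then specific polynomial combinations of $f,g,h,j,\Delta,D$ (for example $u=D$, $v=f_x h$ in the $(3,2)$ case), and the equation $y^2=F_+F_-$ drops out after eliminating one auxiliary variable via a quadratic.

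Two points in your plan would not go through as written. First, the passage from whatever initial parametrisation you choose to the stated $(u,v,w)$ is \emph{not} a projective linear change of coordinates: the paper's $(u,v,w)$ are highly nonlinear in the natural parameters $(x,a,b)\in\PP(1,2,3)$, so you would need to search over birational (Cremona) transformations, which is a much larger space. Second, your attribution of the $w\mapsto -w$ symmetry to the swap $E_1\leftrightarrow E_2$ is backwards: in the paper's model, swapping the two curves is $y\mapsto -y$, while $w\mapsto -w$ (in the $(3,2)$ and $(5,1)$ cases) corresponds to the \emph{other} involution on $Z^*(N,\eps)$, namely switching the choice of square root of the discriminant ratio. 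This second involution is what interchanges $F_+$ and $F_-$.
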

\begin{proof}
Let $E$ be the elliptic curve $y^2 = x^3 + ax + b$. We put 
$\Delta =  -4 a^3 - 27 b^2$, and define polynomials
\begin{align*}
f(x) &= x^3 + a x + b, \\
g(x) &= 3 a x^4 + 18 b x^3 - 6 a^2 x^2 - 6 a b x - a^3 - 9 b^2, \\
h(x) &= 3 a x^2 + 9 b x - a^2, \\
j(x) &= 27 b x^3 - 18 a^2 x^2 - 27 a b x - 2 a^3 - 27 b^2.
\end{align*}
If we assign the variables $x, a, b$ weights 1, 2, 3, then each of
these polynomials is homogeneous.  We note that
$j^2 = -4 h^3 - 27 \Delta f^2$.

\smallskip

\noindent
{\bf{Case ({\em N},\boldmath$\eps$) = (3,2)}.}
The following equations for the family of curves parametrised by
$X_E(3,2)$ are taken from~\cite[Section 13]{g1hess}.  Starting from the
Klein form\footnote{We obtain $D$ from ${\mathfrak D}(\xi,\eta)$ in
  \cite[Section 9]{g1hess} by putting $c_4 = -48a$, $c_6=-864b$,
  multiplying $\xi$ by $12$, and dividing through by $2^{12}3^{3}$.}
\[ D(\xi,\eta) = -27 a \xi^4 - 54 b \xi^3 \eta - 18 a^2 \xi^2 \eta^2 -
54 a b \xi \eta^3 + (a^3 - 27 b^2) \eta^4, \] we define
\[ A(\xi,\eta) = \frac{1}{108} \left| \begin{array}{cc} 
D_{\xi \xi} & D_{\xi \eta} \\ D_{\eta \xi} & D_{\eta \eta} \end{array} \right|, 
\quad \text{ and } \quad
B(\xi,\eta) = \frac{1}{36} \left| \begin{array}{cc} 
D_{\xi} & D_{\eta} \\ A_{\xi} & A_{\eta} \end{array} \right|, 
\]
where the subscripts denote partial derivatives. These forms satisfy
the syzygy
\begin{equation}
\label{klein:32}
-4 A^3 - 27 B^2 = 16 (4 a^3 + 27 b^2)^2 D^3. 
\end{equation}
The family of elliptic curves $3$-congruent to $E$ with power
$\eps = 2$ is given by
\[ y^2 = x^3 + A(\xi,\eta) x + B(\xi,\eta). \]

We dehomogenise by putting $(\xi,\eta)=(x,1)$. Then
$D = f_x^3 - 27 f^2 = j - 3 f_x h$ where $f_x = 3x^2 + a$.  The
quantities $(u,v,r,s) = (D, f_x h, h^3,3^6 \Delta f^4)$ are related by
\begin{equation}
\label{Z32eqn}
 (4 r + (u + 3 v)^2) (r u - v^3) = r s. 
\end{equation}
As we verify in Remark~\ref{rem:inverse} below, this is an equation
for $Z(3,2)$ in $\PP(1,1,2,3)$ where the coordinates $u,v,r,s$ have
weights $1,1,2,3$.  We see by~\eqref{klein:32} that, up to squared
factors, the ratio of discriminants is $s/u$. We substitute
$s = u w^2$ in~\eqref{Z32eqn} to give a quadratic in $r$ whose
discriminant is the polynomial $F_{+} F_{-}$ in the statement of the
theorem.

\medskip

\noindent
{\bf{Case ({\em N},\boldmath$\eps$) = (5,1)}.}
The following equations for the family of curves parametrised by
$X_E(5,1)$ are taken from~\cite[Section 13]{g1hess}.  Starting from the
Klein form\footnote{We obtain $D$ from ${\mathbf D}(\la,\mu)$ in
  \cite[Section 8]{g1hess} by putting $c_4 = -48a$, $c_6=-864b$,
  multiplying $\la$ by $12$, and dividing through by $2^{24}3^{12}$.}
\begin{align*}
D(\la,\mu) = \la&^{12} + 22 a \la^{10} \mu^2 + 220 b \la^9 \mu^3 
  - 165 a^2 \la^8 \mu^4 - 528 a b \la^7 \mu^5
  \\ & - 220 (a^3 + 12 b^2) \la^6 \mu^6 + 264 a^2 b \la^5 \mu^7
  - 165 a (5 a^3 + 32 b^2) \la^4 \mu^8 
  \\ & - 880 b (3 a^3 + 20 b^2) \la^3 \mu^9
  + 22 a^2 (25 a^3 + 168 b^2) \la^2 \mu^{10} 
  \\ & + 20 (19 a^4 b + 128 a b^3) \la \mu^{11}
  + (125 a^6 + 1792 a^3 b^2 + 6400 b^4) \mu^{12},
\end{align*}
we define
\[ A(\la,\mu) = \frac{1}{5808} \left| \begin{array}{cc} 
D_{\la \la} & D_{\la \mu} \\ D_{\mu \la} & D_{\mu \mu} \end{array} \right|,
\quad \text{ and } \quad
B(\la,\mu) = \frac{1}{360} \left| \begin{array}{cc} 
D_{\la} & D_{\mu} \\ A_{\la} & A_{\mu} \end{array} \right|,
\]
where the subscripts denote partial derivatives. These forms
satisfy the syzygy
\begin{equation}
\label{klein:51}
4 A^3 + 27 B^2 = (4 a^3 + 27 b^2) D^5. 
\end{equation}
The family of elliptic curves $5$-congruent to $E$ with power $\eps = 1$
is given by
\[ y^2 = x^3 + A(\la,\mu) x + B(\la,\mu). \]

We dehomogenise by putting $(\la,\mu)=(x,1)$. Then
\[ D = 4 kf - 3(f^2 + g)^2 + 32\Delta(f^2 + g),\]
where
$k(x) = f^3 + f j + 4 \Delta f + 3 g(x f_x - 2 f) = x^9 + 12 a x^7 +
84 b x^6 + \ldots$

The quantities
$(t,u,v,r,s) = (4 f,2 (f^2 + g),16 \Delta,4 k,D)$
are related by
\begin{equation}
\label{Z51eqns}
\begin{aligned}
 r^2 + s t^2 &= u (u^2 - 11 u v - v^2) + (12 u + v) s, \\
 r t &= 3 u^2 - 4 u v + 4 s.
\end{aligned}
\end{equation}
These are equations for $Z(5,1)$ in $\PP(1,2,2,3,4)$ where the
coordinates $t,u,v,r,s$ have weights $1,2,2,3,4$.  We see
by~\eqref{klein:51} that, up to squared factors, the ratio of
discriminants is $s$.  Putting $s= w^2$ we obtain from~\eqref{Z51eqns}
the equation
\[(r^2 - s t^2)^2 = (r^2 + s t^2)^2 - 4 s(r t)^2 = F_{+}(u,v,w) F_{-}(u,v,w)\]
where $F_{\pm}$ are the polynomials in the statement of the theorem.

\medskip

\noindent
{\bf{Case ({\em N},\boldmath$\eps$) = (5,2)}.}
The following equations for the family of curves parametrised by
$X_E(5,2)$ are taken from~\cite[Theorem 5.8]{enqI}.  Starting from the
Klein form\footnote{We obtain $D$ from ${\mathbf D}(\la,\mu)$ in
  \cite[Section 5]{enqI} by putting $c_4 = -48a$, $c_6=-864b$,
  multiplying $\la$ by $12$, and dividing through by $2^{36}3^{15}$.}
\begin{align*}
D(&\la,\mu) = (125 a^3 - 432 b^2) \la^{12}
    + 2430 a^2 b \la^{11} \mu
    - 22 a (25 a^3 - 378 b^2) \la^{10} \mu^2
    \\ & - 110 b (11 a^3 - 108 b^2) \la^9 \mu^3
    - 165 a^2 (5 a^3 - 27 b^2) \la^8 \mu^4
    - 132 a b (53 a^3 - 189 b^2) \la^7 \mu^5
    \\ & + 220 (a^6 - 123 a^3 b^2 + 81 b^4) \la^6 \mu^6
    + 132 a^2 b (19 a^3 - 297 b^2) \la^5 \mu^7
    \\ & - 165 (a^7 - 26 a^4 b^2 + 189 a b^4) \la^4 \mu^8
    - 110 (3 a^6 b - 34 a^3 b^3 + 135 b^5) \la^3 \mu^9
    \\ & - 22 a^2 (a^3 - 3 b^2) (a^3 + 27 b^2) \la^2 \mu^{10}
    - 10 a b (5 a^6 + 82 a^3 b^2 + 189 b^4) \la \mu^{11}
    \\ & + (a^9 - a^6 b^2 - 181 a^3 b^4 - 675 b^6) \mu^{12},
\end{align*}
we define
\[ A(\la,\mu) = \frac{1}{1452} \left| \begin{array}{cc} 
D_{\la \la} & D_{\la \mu} \\ D_{\mu \la} & D_{\mu \mu} \end{array} \right|, 
\quad \text{ and } \quad
B(\la,\mu) = \frac{-1\,\,}{180} \left| \begin{array}{cc} 
D_{\la} & D_{\mu} \\ A_{\la} & A_{\mu} \end{array} \right|,
\]
where the subscripts denote partial derivatives. These forms
satisfy the syzygy
\begin{equation}
\label{klein:52}
-4 A^3 - 27 B^2 = 16(4 a^3 + 27 b^2)^2 D^5. 
\end{equation}
The family of elliptic curves $5$-congruent to $E$ with power
$\eps = 2$ is given by
\[ y^2 = x^3 + A(\la,\mu) x + B(\la,\mu). \]

We dehomogenise by putting $(\la,\mu)=(x,1)$. Then 
\begin{equation}
\label{niceD}
D = 16 \Delta f^4 - g^3 + 4 (2 g^3 - g^2 j - 4 \Delta f^2 g).
\end{equation}
The quantities
$(r,s,v,w) = (\Delta f^4,\Delta f^2 g,g^3,2 g^3 - g^2 j - 4 \Delta f^2 g)$
are related by 
\begin{equation}
\label{Z52eqn}
r (4 s - 2 v + w)^2 + 27 r s v + s w^2 - s^2 (v - 4 w) = 0. 
\end{equation}
This is an equation for $Z(5,2)$ as a cubic surface in $\PP^3$.  We
see from~\eqref{klein:52} and~\eqref{niceD} that, up to squared
factors, the ratio of discriminants is $r (16 r - v + 4 w)$.  Putting
$r (16 r - v + 4 w) = (4 r - u)^2$, where $u$ is a new variable, and
using this equation to eliminate $r$ from~\eqref{Z52eqn}, we obtain a
quadratic in $s$ whose discriminant is the polynomial $F_{+}F_{-}$ in
the statement of the theorem.
\end{proof}

\begin{Remark}
\label{rem:inverse}
Let $(N,\eps) = (3,2)$, $(5,1)$ or $(5,2)$. We saw in the proof of
Theorem~\ref{thm:Zstar} that one model for $Z(N,\eps)$ is the weighted
projective plane $\PP(1,2,3)$ where the co-ordinates $x,a,b$ have
weights $1,2,3$. We mapped this to another model for $Z(N,\eps)$
defined by~\eqref{Z32eqn}, \eqref{Z51eqns} or~\eqref{Z52eqn}. The
inverse maps are as follows.
\begin{align*}
&(3,2) && \left\{
\begin{aligned}
x &= r + v^2, \\
a &= -3 r (r + u v + 2 v^2), \\
b &= r (u + 3 v) (r u + v^3) + 2 r^2 (r + 3 v^2),
\end{aligned} \right. \\
&(5,1) && \left\{
\begin{aligned} 
x & =  32 u - v + 5 t^2, \\
a & = -3 (8 u - v) (32 u - v) - 288 r t + 30 (28 u + v) t^2 - 75 t^4, \\
b & = -2 (32 u - v)^2 (4 u + v) - 144 (32 u - v) rt
+ 6 (32 u - v) (88 u - 5 v) t^2
\\ & \qquad \qquad + 1008 r t^3 - 150 (28 u + v) t^4 + 250 t^6, 
\end{aligned} \right. \\
&(5,2) && \left\{
\begin{aligned} 
x &= 4 r s + 4 r v + r w - s^2, \\
a &= 3 (8 r s^3 + 4 r s^2 v + 6 r s^2 w + r s w^2 - s^4), \\
b &= r^2 s (16 s^3 - 8 s^2 v - 24 s^2 w - 40 s v w - 15 s w^2 + 4 v w^2 - 2 w^3)  \\ & \qquad \qquad + r s^3 (24 s^2 + 8 s v + 34 s w + 7 w^2) - 2 s^6.
\end{aligned} \right.
\end{align*}
\end{Remark}

\begin{Remark}
  There are two naturally defined involutions on the K3-surfaces in
  Theorem~\ref{thm:Zstar}. The first switches the sign of $y$, and
  corresponds to swapping over the pair of $N$-congruent elliptic
  curves.  The second is given on $Z^*(3,1)$ and $Z^*(5,1)$ by
  switching the sign of $w$, and on $Z^*(5,2)$ by
  $(u,v,w,y) \mapsto (\widetilde{u},v,w,(\widetilde{u}/u)^2y)$ where
  $\widetilde{u} = u(v - 4w)/(8u - (v-4w))$. This second involution
  switches the choice of square root for the ratio of
  discriminants. The two involutions commute, and the second swaps
  over the curves $F_+=0$ and $F_-=0$.
\end{Remark}

\begin{Remark}
For a suitable parametrisation of the cuspidal cubic $F_+=0$, 
we obtain a family of elliptic curves with $j$-invariant
\begin{align*}
& (3,1): & j &= 27 (T - 3)^3 (T + 1)^3/T^3, \\
& (5,1): & j &= (T + 5)^3 (T^2 - 5)^3 (T^2 + 5 T + 10)^3/(T^2 + 5 T + 5)^5, \\
& (5,2): & j &= 125 T (2 T + 1)^3 (2 T^2 + 7 T + 8)^3/(T^2 + T - 1)^5.
\end{align*}
These correspond to $X_{\rm s}^+(3)$, $X_{\rm s}^+(5)$ and
$X_{\rm ns}^+(5)$, where $X_{\rm s}^+(N)$ and $X_{\rm ns}^+(N)$ are
the modular curves associated to the normaliser of a split or
non-split Cartan subgroup of level $N$. We may compute
$X_{\rm s}^+(N)$ as the quotient of $X_0(N^2)$ by the Fricke
involution, whereas the formula for $X_{\rm ns}^+(5)$ is taken from
\cite[Corollary 5.3]{IChen}. The use of these modular curves to
construct pairs of $N$-congruent elliptic curves is described further
in \cite{Halberstadt}.
\end{Remark}

Let $N$ be an odd integer and let $\eps \in (\Z/2N\Z)^\times$. Then
$X_E(2N,\eps) \to X_E(N,\eps)$ is geometrically a Galois covering with
Galois group $\PSL_2(\Z/2\Z) \isom S_3$.  Since elliptic curves which
are $2$-congruent have ratio of discriminants a square, it follows
that $Z(2N,\eps) \to Z^*(N,\eps)$ is a degree 3 cover. In the cases
$(2N,\eps)= (6,5)$, $(10,1)$ and $(10,3)$ the surface $Z(2N,\eps)$ has
an elliptic fibration. The pushfoward of a fibre gives a divisor class
$D$ on the K3-surface $Z^*(N,\eps)$ with $D^2 = 2$.  Using this
divisor class $D$ we may write $Z^*(N,\eps)$ as a double cover of
$\PP^2$. We have arranged (with the benefit of hindsight) that the
equations in Theorem~\ref{thm:Zstar} write $Z^*(N,\eps)$ as a double
cover of $\PP^2$ in exactly this way.

The equations for $Z(2N,\eps)$ in Theorems~\ref{thm1} and~\ref{thm2}
may be obtained from the equations for $Z^*(N,\eps)$ in
Theorem~\ref{thm:Zstar} as follows.  The tangent line to a general
point on the cuspidal cubic $F_{+}(u,v,w) = 0$ has equation:
\begin{align}
(2N,\eps) &= (6,5) &   (T^3 - 1) u + 3 (T - 1) v - w &=0, \\
(2N,\eps) &= (10,1) &  (T - 2)u  - T (T - 1)^2 v + 2 (T - 1) w &=0, \\
\label{tgt}
(2N,\eps) &= (10,3) &   T^3 u - (T + 1)v - T^2 w &= 0. 
\end{align}
We parametrise this line, and substitute into the right hand side of
the equation $y^2 = F_{+}(u,v,w) F_{-}(u,v,w)$.  After cancelling a
squared factor (which arises since we chose a tangent line) the right
hand side is a binary quartic with a linear factor.  We now have the
equation for a genus one curve over $\Q(T)$ with a rational point.
Putting this elliptic curve in Weierstrass form gives the equations
for $Z(6,5)$, $Z(10,1)$ and $Z(10,3)$ in Theorems~\ref{thm1}
and~\ref{thm2}.

It remains to show that these degree $3$ covers of $Z^*(N,\eps)$ are
the ones we wanted. We use the following lemma.

\begin{Lemma}
\label{lem:congr2}
Let $K$ be a field of characteristic not $2$ or $3$.  Elliptic curves
$E_1$ and $E_2$ over $K$ with $j$-invariants $j_1$ and $j_2$, with
$j_1,j_2 \not\in \{0,1728\}$, are $2$-congruent if and only if there
exist $m,x \in K$ satisfying $(j_1 - 1728)(j_2 - 1728) = m^2$ and
\[ x^3 - 3 j_1 j_2 x - 2 j_1 j_2 (m + 1728) = 0.\]
\end{Lemma}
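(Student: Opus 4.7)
The plan is to reformulate $2$-congruence of $E_1,E_2$ as equality of splitting fields of their $2$-division cubics, then extract the stated condition by a symmetric-function calculation. Since $E[2]$ as a Galois module depends only on the $j$-invariant of $E$ (quadratic twists have isomorphic $2$-torsion as Galois modules), I would first replace each $E_i$ by its canonical model $y^2 = x^3 - 3 j_i(j_i-1728)x - 2 j_i(j_i-1728)^2$, so that the $2$-division cubic is $f_i(x) = x^3 - 3 j_i(j_i-1728)x - 2 j_i(j_i-1728)^2$. Then $E_1\sim_2 E_2$ precisely when $f_1$ and $f_2$ have the same splitting field in $\overline K$.

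For the quadratic subfield (a necessary condition for equality of splitting fields), a direct calculation gives $\operatorname{disc}(f_i) = 432^2\, j_i^2(j_i-1728)^3$, placing $\operatorname{disc}(f_i)$ in the same class as $j_i-1728$ in $K^\times/(K^\times)^2$. Hence the splitting fields share a common quadratic subfield iff $(j_1-1728)(j_2-1728)$ is a square in $K$, i.e.\ equals $m^2$ for some $m \in K$; this gives the first condition and determines $m$ up to sign. Under this hypothesis, the splitting fields actually coincide iff there is a Galois-equivariant bijection $\pi$ between the roots $\{\alpha_i\}$ of $f_1$ and $\{\beta_i\}$ of $f_2$.

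Given such a $\pi$, I would introduce the element
\[
 x := \frac{1}{3m}\sum_{i=1}^3 \alpha_i\beta_{\pi(i)} \in K
\]
(Galois-invariant by construction), and verify, by expanding in the elementary symmetric functions of the $\alpha_i$ and $\beta_i$ prescribed by the canonical form, that $x$ satisfies the cubic $x^3 - 3 j_1 j_2 x - 2 j_1 j_2(m+1728) = 0$. As a sanity check, in the degenerate case $j_1=j_2=j$ with $\pi=\mathrm{id}$ one has $3m x = \sum\alpha_i^2 = 6j(j-1728)$ and $m = j-1728$, so $x = 2j$, consistent with the factorisation $(x-2j)(x+j)^2$ of the cubic. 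The three roots of the cubic in fact correspond to the three matchings obtained from a fixed Galois-equivariant $\pi_0$ by composing with the three elements of $A_3 \subset S_3 = \Aut(E_1[2])$: $\pi_0$ itself is Galois-fixed and gives the $K$-rational root, while the other two are fixed or swapped by Galois according as the Galois group of the splitting field is $A_3$ or $S_3$. For the converse direction, one reads the construction backwards: given $m$ and $x \in K$ with the stated properties, solving $3mx = \sum\alpha_i\beta_{\pi(i)}$ together with the companion identities from $\sum(\alpha_i\beta_{\pi(i)})^2$ and $\prod\alpha_i\beta_{\pi(i)}$ pins down a matching $\pi$ that must then be Galois-equivariant, yielding the $2$-congruence.

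The main obstacle is the symbolic verification that the invariant $x$ satisfies the displayed cubic: this is a lengthy but routine symmetric-function manipulation (expand $x^3$, group by Galois orbits, reduce using the coefficients of $f_1,f_2$), best carried out with a computer algebra system such as Magma. Care is needed with the sign of $m$, which must be chosen compatibly with the orientation of the chosen matching $\pi$; only one of the two square roots of $(j_1-1728)(j_2-1728)$ will, in general, give a cubic with the correct $K$-rational root.
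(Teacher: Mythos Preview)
Your approach is correct and genuinely different from the paper's. The paper does not carry out any construction at all: it simply cites the explicit parametrisations of $2$-congruent families in \cite{RubinSilverberg2} and \cite[\S\S 8, 13]{g1hess} and asserts that the displayed criterion follows from those formulae ``by a generic calculation''. In other words, the paper reduces the lemma to a mechanical identity between two already-published descriptions of $X_E(2)$.

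Your route is more intrinsic. You identify $E_i[2]\setminus\{0\}$ with the root set of a normalised $2$-division cubic $f_i$, reduce $2$-congruence to the existence of a Galois-equivariant bijection between these root sets, and then realise the three even matchings (for a fixed sign of $m$) as the roots of a resolvent cubic built from the quantities $\sum_i \alpha_i\beta_{\pi(i)}$. This is essentially a Tschirnhaus/resolvent construction, and it explains \emph{why} the criterion has the shape it does: the quadratic condition on $m$ records equality of the sign characters $G_K\to S_3\to\{\pm 1\}$, and the cubic in $x$ is the factor of the degree-$6$ matching polynomial that lives over $K$ once those sign characters agree. Your discriminant check and the $j_1=j_2$ sanity check are both correct; note further that for $j_1\neq j_2$ the resolvent cubic has distinct roots (its discriminant is $432^2 j_1^2 j_2^2\big((j_1-1728)+(j_2-1728)-2m\big)$, which vanishes only when $j_1=j_2$), so a $K$-rational root genuinely pins down a unique, hence Galois-stable, matching $\pi_0$. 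This sharpens your converse paragraph, which as written is a little loose.

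What each approach buys: the paper's is shorter and leans on existing literature; yours is self-contained, clarifies the role of the sign of $m$, and would adapt to other base fields or to proving analogous resolvent criteria without first having an explicit model of $X_E(2)$ in hand. Both ultimately rely on a symbolic verification (yours of the resolvent coefficients, the paper's of the match with the cited formulae), so neither avoids computer algebra entirely.
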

\begin{proof} This follows from the formulae in \cite{RubinSilverberg2} 
or \cite[Sections 8 and 13]{g1hess} by a generic calculation.
\end{proof}

We illustrate the use of Lemma~\ref{lem:congr2} in the case
$(2N,\eps)=(10,3)$, the other cases being similar. Above each point
$(u:v:w) \in \PP^2$ there are a pair of points on $Z^*(5,2)$ possibly
defined over a quadratic extension. These points correspond to a pair
of elliptic curves, say with $j$-invariants $j_1$ and $j_2$.  A
calculation using the formulae in Remark~\ref{rem:inverse} shows that,
for $m$ a suitable choice of square root of
$(j_1 - 1728)(j_2 - 1728)$, we have
\begin{align*}
j_1 j_2 &= G_6(u,v,w) H(u,v,w)^2 \\
j_1 j_2 (m + 1728) &= G_9(u,v,w) H(u,v,w)^3
\end{align*}
where
\begin{align*}
G_6(u,v,w) &= 640 u^4 v^2 - 768 u^4 v w - 72 u^3 v^3 - 240 u^3 v^2 w 
    + \ldots \\
G_9(u,v,w) &= 6912 u^7 v^2 - 1376 u^6 v^3 - 14976 u^6 v^2 w 
    + \ldots  
\end{align*}
are irreducible homogeneous polynomials of degrees $6$ and $9$, and
$H \in \Q(u,v,w)$ is a rational function.  Finally we claim that the
polynomials
\begin{equation}
\label{cubic1}
X^3 - 3 G_6(u,v,w) X - 2 G_9(u,v,w) = 0,
\end{equation}
arising from Lemma~\ref{lem:congr2}, and 
\begin{equation}
\label{cubic2}
u T^3 - w T^2 - v T - v = 0,
\end{equation}
appearing in~\eqref{tgt}, define the same cubic extension. Indeed we
find by computer algebra that if~\eqref{cubic2} has root $T_0$
then~\eqref{cubic1} has root
\begin{align*} X_0 &= 3 u^2 (8 u - 3 v - 4 w) T_0^2
               + 12 u (2 u v - 4 u w + v w) T_0 \\
  & \qquad \qquad \qquad \qquad \qquad \qquad \qquad 
- 16 u^2 v + 6 u v^2 + 8 u w^2 - v w^2 + 4 w^3. 
\end{align*}

\section{Computing the Picard numbers}
\label{sec:pic}

Let $E/\Q(T)$ be one of the elliptic curves in Theorems~\ref{thm1}
and~\ref{thm2}.  We write $X \to \PP^1$ for the minimal fibred surface
with generic fibre $E$. The reduction of $E$ mod $p$ is an elliptic
curve $E_p/\F_p(T)$, and the reduction of $X$ mod $p$ is a surface
$X_p/\F_p$. We will always take $p$ to be a prime of good reduction.

Let $\overline{X} = X \times_\Q \Qbar$ and
$\overline{X}_p = X_p \times_{\F_p} \Fbar_p$.  The Shioda-Tate formula
\cite[Corollary~5.3]{Shioda} tells us that
\begin{equation}
\label{ST0}
\rank E(\Qbar(T)) + 2 + \textstyle\sum_{t \in \PP^1(\Qbar)} (m_t - 1) = \rank \NS(\overline{X}), 
\end{equation}
and
\begin{equation}
\label{STp}
\rank E_p(\Fbar_p(T)) + 2 + \textstyle\sum_{t \in \PP^1(\Fbar_p)} (m_t - 1) = \rank \NS(\overline{X}_p),
\end{equation}
where $m_t$ is the number of irreducible components in the fibre above
$t$.  We write $\rho$ and $\rho_p$ for the numbers on the right
of~\eqref{ST0} and~\eqref{STp}. These are the geometric Picard numbers
of $X$ and $X_p$.  The sections exhibited in Tables~\ref{tabMW}
and~\ref{tabMWgeom} give a lower bound for $\rank E(\Qbar(T))$ and
hence by~\eqref{ST0} a lower bound for $\rho$.  These lower bounds are
exactly the values recorded in Tables~\ref{tab1} and~\ref{tab2}.

Let $X \to \PP^1$ be a minimal elliptic surface with non-constant
$j$-invariant, and let $m = \chi(\OO_X)$. This may be computed from
the fact that sum of the Euler numbers of the singular fibres is
$12m$.  By \cite[Lemma IV.1.1]{Miranda} the Hodge diamond of $X$ is
\[ \begin{array}{ccccc}
& & h^{0,0} \\
& h^{1,0} && h^{0,1} \\
h^{2,0} && h^{1,1} && h^{0,2} \\
& h^{2,1} && h^{1,2} \\
&& h^{2,2}
\end{array} \qquad \begin{array}{ccccc}
& & 1 \\
& 0 && 0 \\
m-1 && 10m && m-1 \\
& 0 && 0 \\
&& 1
\end{array} \]                   
The surfaces in Theorem~\ref{thm1} have $m=2$ and those in
Theorem~\ref{thm2} have $m=3$.  To tie in with \cite[Theorem 4]{KS},
we note that $p_g = h^{2,0} = m- 1$.  By the Lefschetz theorem on
$(1,1)$-classes we have $\rho \le h^{1,1} = 10 m$.  This already
determines $\rho$ in all cases with $N \le 8$.  It remains for us to
improve this upper bound by $1$ in the cases
$(N,\eps) = (9,1),(12,1),(9,2)$, and to improve it by $2$ in the cases
$(N,\eps) = (10,1),(10,3),(11,1)$.

The main tool we wish to use (see \cite[Proposition 6.2]{vL-heron}) is
that there is an injective map
$\NS(\overline{X}) \to \NS(\overline{X}_p)$ that preserves the
intersection pairing.

Let $f_p(x)$ be the characteristic polynomial of Frobenius acting on
$H^2_{{\text{\'et}}}(\overline{X}_p,\Q_\ell(1))$, normalised so that
$f_p(0)=1$.  This is a polynomial of degree $b_{2} = 12m - 2$,
independent of the choice of prime $\ell \not=p$. By the Weil
conjectures it satisfies the functional equation
$f_p(x) = \pm x^{b_2} f_p(1/x)$.  The polynomials $f_p(x)$ may be
computed from the numbers $n_r = | X_p(\F_{p^r}) |$ using the
Lefschetz trace formula.  See for example \cite[Section 3]{vL1}, where
$f_p$ is denoted $\widetilde{f}_p$.  We used both the functional
equation and the Magma function {\tt FrobeniusActionOnTrivialLattice}
to limit how many $n_r$ we had to compute.
The polynomials $f_p(x)$ for two carefully chosen primes of good reduction
are recorded in Table~\ref{tab:frob}.
\begin{table}[ht]
\caption{} 
\label{tab:frob}
$\begin{array}{ll}
(N,\eps) & \multicolumn{1}{c}{\text{ Characteristic polynomial of Frobenius }}  \\ \hline
(9,1) & f_5(x) = (x - 1)^{16} (x + 1)^2 (x^2 + x + 1) (x^2 + \frac{7}{5} x + 1) \\
      & f_7(x) = (x - 1)^{18} (x + 1)^2 (x^2 + \frac{10}{7} x + 1) \\
(12,1) & f_5(x) = (x - 1)^{16} (x + 1)^4 (x^2 + \frac{6}{5} x + 1) \\
       & f_{11}(x) = (x - 1)^{12} (x + 1)^8 (x^2 + \frac{6}{11} x + 1) \\
(9,2) & f_7(x) = (x - 1)^{24} (x + 1)^2 (x^2 + x + 1)^2 (x^2 + \frac{10}{7} x + 1) (x^2 + \frac{13}{7} x + 1) \\
      & f_{13}(x) = (x - 1)^{24} (x^2 + x + 1)^3 (x^2 + \frac{1}{13} x + 1) (x^2 + \frac{25}{13} x + 1) 
\\
(10,1) & f_7(x) = (x - 1)^{24} (x + 1)^2 (x^2 + x + 1)^2 (x^2 + \frac{10}{7} x + 1)^2 \\
       & f_{17}(x) = -(x - 1)^{25} (x + 1)^5 (x^2 - \frac{2}{17} x + 1) (x^2 + \frac{25}{17} x + 1) \\
(10,3) & f_{31}(x) =  (x - 1)^{24} (x + 1)^2 (x^2 + x + 1)^2 (x^2 + \frac{46}{31} x + 1) (x^2 + \frac{58}{31} x + 1)
\\
       & f_{37}(x) = (x - 1)^{28} (x + 1)^2 (x^2 + \frac{70}{37} x + 1)^2 \\
(11,1) & f_{23}(x) =  (x - 1)^{28} (x + 1)^2 (x^2 + \frac{42}{23} x + 1) (x^2 + \frac{45}{23} x + 1) \\
       & f_{53}(x) = (x - 1)^{28}  (x^2 + x + 1) (x^2 + \frac{25}{53} x + 1) (x^2 + \frac{70}{53} x + 1) 
\end{array}$ 
\end{table}

Let $\Delta_p \in \Q^\times/(\Q^\times)^2$ be the absolute value of
the determinant of the intersection pairing on $\NS(\overline{X}_p)$.
It may be computed using either of the following two lemmas.

\begin{Lemma} 
\label{lem:Kl}
Write $f_p(x) = g_p(x) h_p(x)$ where every root of $g_p$ is a root of
unity, and no root of $h_p$ is a root of unity. Then
$\rho_p \le \deg g_p$ and in the case of equality we have
$\Delta_p \equiv h_p(1) h_p(-1) \mod{(\Q^\times)^2}$.
\end{Lemma}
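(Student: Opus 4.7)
The plan is to combine the injectivity of the $\ell$-adic cycle class map with the Artin--Tate formula.

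For the first inequality, the cycle class map
$$c_1 : \NS(\overline{X}_p) \otimes_{\Z} \Q_\ell \hookrightarrow H^2_{\text{\'et}}(\overline{X}_p, \Q_\ell(1))$$
is injective. Every divisor class on $\overline{X}_p$ is defined over some finite extension $\F_{p^r}$, so its cycle class is fixed by $\operatorname{Frob}^r$, hence is a Frobenius eigenvector whose eigenvalue is an $r$-th root of unity. It follows that the image of $c_1$ lies in the sum of the generalised eigenspaces of Frobenius for root-of-unity eigenvalues, a subspace of dimension $\deg g_p$. This yields $\rho_p \le \deg g_p$.

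When equality $\rho_p = \deg g_p$ holds, the Tate conjecture for divisors on $X_p$ is valid (after base change to $\F_{p^r}$ for some $r$ so that all geometric divisor classes become $\F_{p^r}$-rational), and the image of $c_1$ is the full root-of-unity part $V_g$ of $H^2_{\text{\'et}}(\overline{X}_p,\Q_\ell(1))$, with orthogonal complement $V_h$ carrying the Frobenius action with characteristic polynomial $h_p$. I would then apply the Artin--Tate formula of Milne, which under the Tate conjecture expresses $|\Delta_p|$ in terms of the leading coefficient of the $L$-function of $X_p$ at $s=1$, the order $|\operatorname{Br}(X_p)|$, the torsion in $\NS$, and a power of $p$ governed by $\alpha(X_p) = \chi(\OO_{X_p}) - 1$. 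After reducing modulo $(\Q^\times)^2$, the Brauer group order is a square by Liu--Lorenzini--Raynaud, the torsion denominator $|\NS(\overline{X}_p)_{\text{tors}}|^2$ is automatically a square, and, on carefully accounting for the passage between $\F_p$ and $\F_{p^r}$, the transcendental contribution collapses to $h_p(1)\, h_p(-1)$ modulo squares.

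The main obstacle is the second part. The first inequality is essentially formal once the cycle class map is in hand. For the discriminant identity the subtleties are: (i) handling eigenvalues of $g_p$ equal to $-1$, which correspond to divisor classes defined over $\F_{p^2}$ but not $\F_p$; (ii) tracking the power of $p$ attached to $\alpha(X_p)$ modulo $(\Q^\times)^2$ (trivial for properly elliptic surfaces, where $\alpha = 2$, and requiring more care for K3 surfaces, where $\alpha = 1$); and (iii) verifying that the precise combination $h_p(1)\, h_p(-1)$ emerges rather than some other evaluation --- the product $h_p(1)\,h_p(-1) = \det(I - F^2 \mid V_h)$ is naturally explained by passing to the quadratic base change, which renders all $\pm 1$-eigenvalues Frobenius-fixed. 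This bookkeeping is the computation performed by van Luijk and Kloosterman in the cited references.
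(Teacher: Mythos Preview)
Your outline follows the same route as the paper: the inequality $\rho_p \le \deg g_p$ via the cycle class map, and the discriminant formula via the Artin--Tate conjecture (equivalently, Kloosterman's Proposition~4.7). The first part is fine.

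For the second part there is a genuine gap, and it lies exactly where you wave your hands. You correctly note that one must pass to $\F_{p^r}$ with $r$ large enough that \emph{every} root-of-unity eigenvalue becomes $1$; in general $r>2$. What Artin--Tate (or Kloosterman) then delivers, modulo squares, is
\[
\Delta_p \equiv \prod_i (1-\beta_i^{\,r}) \;=\; \prod_{d\mid r}\prod_i \Phi_d(\beta_i),
\]
with $r$ even. Your remark that $h_p(1)h_p(-1)=\det(I-F^2\mid V_h)$ is the $d\in\{1,2\}$ part of this product, but you still owe the reader an argument that each factor $\prod_i \Phi_d(\beta_i)$ with $d>2$ is a rational square. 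This is \emph{not} in van Luijk or Kloosterman; it is the refinement the present paper supplies. The mechanism is the functional equation for $h_p$: the non-root-of-unity eigenvalues come in reciprocal pairs $\gamma_1,\ldots,\gamma_m,\gamma_1^{-1},\ldots,\gamma_m^{-1}$, and for $d>2$ one has $\Phi_d(x)=x^{\phi(d)}\Phi_d(1/x)$ with $\phi(d)=2n$ even, so $\gamma_j^{-n}\Phi_d(\gamma_j)=\gamma_j^{\,n}\Phi_d(\gamma_j^{-1})$ and hence $\prod_i\Phi_d(\beta_i)=\bigl(\prod_{j=1}^m \gamma_j^{-n}\Phi_d(\gamma_j)\bigr)^2$. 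Once you insert this step, your sketch becomes the paper's proof.

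Two smaller remarks. Your concern~(ii) about the power of $p$ attached to $\alpha(X_p)$ evaporates as soon as you take $r$ even (which Kloosterman requires anyway): the factor $p^{\,r\alpha(X_p)}$ is then a square regardless of whether $\alpha=1$ or $2$. And your ``quadratic base change'' heuristic in~(iii) is only literally correct when all roots of $g_p$ lie in $\{\pm 1\}$; otherwise it explains the target $h_p(1)h_p(-1)$ but not why the remaining cyclotomic factors drop out.
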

\begin{proof}
  The first part is described for example in \cite{vL1}. The Tate
  conjecture predicts that this inequality is always an equality, and
  this has been proved in many cases.  See \cite[Section 17.3]{H} for
  the history of this problem and further references.  The formula for
  $\Delta_p$ is a small refinement of a result of Kloosterman, that in
  turn depends on known cases of the Artin-Tate conjecture.

  Let $F_k(x) = \prod (1 - p^k \alpha_i^k x)$ where
  $f_p(x) = \prod(1- \alpha_ix)$.  The result of Kloosterman
  \cite[Proposition 4.7]{Kl}, is that if $k$ is an even integer with
  $\alpha_i^k=1$ for all $\alpha_i$ which are roots of unity, then
\begin{equation}
\label{eqn:K}
\Delta_p = \lim_{s \to 1} \frac{F_k(p^{-ks})}{(1-p^{k(1-s)})^\rho}.
\end{equation}
Let $H_k(x) = \prod (1- p^k \beta_i^k x)$ where $h_p(x) = \prod(1- \beta_ix)$.
Then $F_k(x) = (1-p^k x)^\rho H_k(x)$, and~\eqref{eqn:K} becomes
\[ \Delta_p = H_k(p^{-k}) = \prod_i (\beta_i^k-1) = h_p(1)h_p(-1)
\prod_{{\substack{d \mid k\\d > 2}}} \prod_i \Phi_d(\beta_i) \]
where $\Phi_d$ is the $d$th cyclotomic polynomial.  For $d > 2$ we
claim that $\prod_i \Phi_d(\beta_i)$ is a rational square.  By the
functional equation we have
\[\beta_{1}, \ldots, \beta_{2m} = \gamma_1, \ldots, \gamma_m,
\gamma_1^{-1}, \ldots , \gamma_m^{-1}.\]
Since $d>2$ we have $\Phi_d(x) = x^{\phi(d)} \Phi_d(1/x)$ where
$\phi(d)$ is even, say $\phi(d) = 2n$. Therefore
$\gamma_i^{-n} \Phi_d(\gamma_i) = \gamma_i^{n}
\Phi_d(\gamma_i^{-1})$.
It follows that $\prod_{i=1}^m \gamma_i^{-n} \Phi_d(\gamma_i) \in \Q$
and
\[ \prod_{i=1}^{2m} \Phi_d(\beta_i) = \prod_{i=1}^{2m} \beta_i^{-n}
\Phi_d(\beta_i) = \bigg(\prod_{i=1}^{m} \gamma_i^{-n}
\Phi_d(\gamma_i)\bigg)^2. \qedhere \]
\end{proof}

\begin{Lemma} 
\label{lem:BSD}
Suppose that $P_1, \ldots P_r$ generate a finite index subgroup of
$E_p(\Fbar_p(T))$.  Then we have
$\Delta_p \equiv (\prod_t c_t) \Reg(P_1, \ldots, P_r)
\mod{(\Q^\times)^2}$
where the product is over $t \in \PP^1(\Fbar_p)$ and $c_t$ is the
number of irreducible components of multiplicity one in the fibre of
$\overline{X}_p$ above $t$.
\end{Lemma}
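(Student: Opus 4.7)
The plan is to invoke Shioda's function-field analogue of the Birch--Swinnerton-Dyer formula for the discriminant of the Néron--Severi lattice of an elliptic surface, and then discard every factor that happens to be a perfect square. Specifically, I would appeal to the identity
\[
|\det \NS(\overline{X}_p)| \cdot |E_p(\Fbar_p(T))_{\text{tors}}|^2
= \textstyle\prod_{t \in \PP^1(\Fbar_p)} c_t \cdot \Reg(E_p),
\]
where $\Reg(E_p)$ denotes the regulator of a basis of $E_p(\Fbar_p(T))/\text{tors}$ under the Néron--Tate height pairing, and $c_t$ is as in the lemma. This formula is derived from the standard decomposition of $\NS(\overline{X}_p)$ via the \emph{trivial lattice} spanned by a fibre, the zero section, and the non-identity components of reducible fibres; the Mordell--Weil group appears as the quotient by the trivial lattice (modulo torsion), giving the exact sequence from which the determinants multiply as stated.

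To pass from this identity to the claim, I would make two easy observations. First, $|E_p(\Fbar_p(T))_{\text{tors}}|^2$ is a rational square and vanishes modulo $(\Q^\times)^2$. Second, if $P_1, \ldots, P_r$ generate a subgroup of finite index $n$ in $E_p(\Fbar_p(T))/\text{tors}$, then $\Reg(P_1, \ldots, P_r) = n^2 \Reg(E_p)$, and so these two regulators agree modulo squares as well. Reducing the displayed identity modulo $(\Q^\times)^2$ then yields exactly $\Delta_p \equiv \prod_t c_t \cdot \Reg(P_1,\ldots,P_r)$.

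The only real point to verify is that the contribution to $\det$(trivial lattice) from a singular fibre of Kodaira type $T_v$ coincides with the $c_t$ in the statement. This is a fibre-by-fibre check against the Kodaira classification: the non-identity components form a root lattice whose discriminant is exactly $c_t$, for example $A_{n-1}$ (discriminant $n$) for type $\I_n$, giving $c_t = n$; $D_{n+4}$ (discriminant $4$) for $\I_n^*$, giving $c_t = 4$; and $A_2$ (discriminant $3$) for type $\mathrm{IV}$, giving $c_t = 3$. Since only types $\I_n$, $\I_0^*$ and $\mathrm{IV}$ occur in Tables~\ref{tab1} and~\ref{tab2}, no further case analysis is needed, and this is the main --- but essentially routine --- point where care is required.
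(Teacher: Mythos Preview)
Your proposal is correct and follows exactly the approach the paper takes: the paper's proof is simply a citation to Shioda \cite[Theorem~8.7 and~(7.8)]{Shioda}, and what you have written is precisely an unpacking of those results --- the determinant formula relating $\det\NS$, the torsion, the regulator, and the fibre contributions, together with the identification of the latter with the $c_t$. Your fibre-by-fibre check that the root-lattice discriminant equals the number of multiplicity-one components is the content of Shioda's (7.8), so nothing is missing.
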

\begin{proof}
See \cite[Theorem 8.7 and (7.8)]{Shioda}.
\end{proof}

In the calculations below, we sometimes needed to find explicit
generators for $E_p(\F_p(T))$. These were found by searching on
$2$-coverings, computed using $2$-descent in Magma, as implemented in the
function field case by S. Donnelly.

If $\rho = \rho_p = \rho_q$ for distinct primes $p$ and $q$, then we
have $\Delta_p \equiv \Delta_q \mod{(\Q^\times)^2}$.  As observed by
van Luijk \cite{vL1}, this can sometimes be used to improve our upper
bound on $\rho$ by $1$. This is particularly useful since (assuming
the Tate conjecture) $\rho_p$ is always even. Indeed
$\rho_p = \deg g_p = b_2 - \deg h_p$, and $\deg h_p$ is even by the
functional equation.  See \cite{Kl} and \cite{BN} for further
examples.

\medskip

\noindent
{\bf Case ({\em N},\boldmath$\eps$) = (9,1).}
We already know that $\rho = 19$ or $20$. Since the Tate conjecture
has been proved for elliptic K3-surfaces, equality holds in
Lemma~\ref{lem:Kl}.  By Lemma~\ref{lem:Kl} we compute
$\Delta_5 = 3 \cdot 17$ and $\Delta_7 = 2 \cdot 3$.  Since these are
different, it follows by the method of van Luijk that $\rho = 19$.

\medskip

\noindent
{\bf Case ({\em N},\boldmath$\eps$) = (12,1).}
This is identical to the previous example, except that now
$\Delta_5=1$ and $\Delta_{11}=7$.

\medskip

\noindent
{\bf Case ({\em N},\boldmath$\eps$) = (9,2).}
We already know that $\rho = 29$ or $30$. Let $p=7$ or $13$. By
Lemma~\ref{lem:Kl} and~\eqref{STp} 
we have $\rank \,\, E_p(\Fbar_p(T)) \le 3$. We prove equality by
exhibiting three independent points of infinite order in
$E_p(\F_p(T))$.  In addition to the reductions of the two points in
Table~\ref{tabMW}, we have when $p=7$ the point
\[( 5 T^5 + 6 T^4 + 4 T^2 + 6 T, T^7 + 3 T^6 + 6 T^3 + 2 T^2 + 2 T), \]
and when $p=13$ the point
\[ ( 4 T^6 + 8 T^5 + 3 T^4 + 7 T^3 + 5 T^2 + 10 T + 2, 10 T^9 + 4 T^8
+ 5 T^6 + 5 T^5 + 12 T^3 + 4 T^2 + 12). \]
Using either Lemma~\ref{lem:Kl} or Lemma~\ref{lem:BSD} we find that
$\Delta_7 = 2$ and $\Delta_{13} = 17$.  Since these are different, it
follows that $\rho = 29$.

\medskip

In the cases $(N,\eps) = (10,1),(10,3),(11,1)$ we aim to show that
$\rho = 28$.  We were unable\footnote{There is presumably a 
systematic reason for this, similar to that described in \cite{Charles}.}
to find a prime $p$ with
$\rho_p = 28$, despite computing the polynomials $f_p(x)$ for all
$p < 150$.  This prompted us to try a variant of van Luijk's method,
were we use the intersection pairing to improve our upper bound for
$\rho$ by $2$.

\medskip

\noindent
{\bf Case ({\em N},\boldmath$\eps$) = (10,1).}
We already know that $\rho = 28$, $29$ or $30$.  In addition to the
point $P_1 = (0,0)$ in Table~\ref{tabMW} we have when $p=7$ the points
\begin{align*}
Q_1 &= (6 T^6 + 6 T^4 + 4 T^3 + 5 T^2,4 T^9 + 6 T^8 + 6 T^7 + T^6 + T^5 + 3 T^4), \\
Q_2 &= (T^6 + 5 T^5 + 6 T^4 + 4 T^3 + 5 T^2,2 T^9 + 6 T^8 + 2 T^7 + T^6 + 3 T^4),
\end{align*}
and when $p=17$ the points
\begin{align*}
R_1 &= (16 T^6 + 13 T^5 + 6 T^4 + 4 T^3 + 12 T^2,
       4 T^9 + 2 T^8 + 5 T^7 + 8 T^5 + 15 T^4), \\
R_2 &= ((6 T^8 + 8 T^7 + 2 T^6 + 5 T^5 + 8 T^4 + 4 T^3 + T^2)/(T + 6)^2, \,\, \ldots \,\,).
\end{align*}
Using either Lemma~\ref{lem:Kl} or Lemma~\ref{lem:BSD} we find that
$\Delta_7 = 1$ and $\Delta_{17} = 2 \cdot 59$. Since these are
different, it follows that $\rho \le 29$.

Reducing mod $7$ or $17$ does not change the Kodaira symbols of the
singular fibres. So by Lemma~\ref{lem:BSD} it will be enough for us to
work with the height pairing on the Mordell-Weil group, rather than
the intersection pairing on the full N\'eron-Severi group. We compute
\begin{align*}
\Reg(P_1,uQ_1 + vQ_2) & = \tfrac{2}{75} ( 7 u^2 - 12 uv + 18 v^2), \\
\Reg(P_1,xR_1 + yR_2) & = \tfrac{1}{450} ( 139 x^2 + 76 xy + 316 y^2).
\end{align*}
If $\rho = 29$ then the equation
$\tfrac{2}{75} ( 7 u^2 - 12 u v + 18 v^2) = \tfrac{1}{450} ( 139 x^2 +
76 xy + 316 y^2)$
has a solution in rational numbers $u,v,x,y$ not all zero. However
this quadratic form of rank $4$ is not locally soluble over the
$3$-adics.  Therefore $\rho = 28$.

\medskip

\noindent
{\bf Case ({\em N},\boldmath$\eps$) = (10,3).}
We already know that $\rho = 28$, $29$ or $30$.  Let $p = 31$ or
$37$. Since $p \equiv 1 \pmod{3}$ the reductions of the points in
Tables~\ref{tabMW} and~\ref{tabMWgeom} give us points
$P_1,P_2,P_3,P_4 \in E_p(\F_p(T))$. In addition when $p=31$ we have
\begin{align*}
Q_1 &= (20 T^4 + 13 T^3 + 30 T^2 + 6 T,5 T^5 + 4 T^4 + 29 T^3 + 4 T^2 + 5 T), \\
Q_2 &= (7 T^6 + 12 T^5 + 9 T^4 + 19 T^2 + 13 T + 4)/(T+29)^2, \,\, \ldots \,\, ),
\end{align*}
and when $p=37$ we have 
\begin{align*}
R_1 &= (36 T^4 + 11 T^3 + 4 T^2,26 T^5 + 34 T^4 + 2 T^3 + 15 T^2), \\
R_2 &= (6 T^4 + 5 T^3 + T^2 + 26 T + 32,
       29 T^5 + 35 T^4 + 2 T^3 + 15 T^2 + 19 T + 10).
\end{align*}
Using either Lemma~\ref{lem:Kl} or Lemma~\ref{lem:BSD} we find that
$\Delta_{31} = 2 \cdot 5$ and $\Delta_{37} = 1$. Therefore
$\rho \le 29$.

As in the previous example, reducing mod $31$ or $37$ does not change
the singular fibres.  We compute
\begin{align*}
\Reg(P_1,P_2,P_3,P_4,uQ_1 + vQ_2) & = \tfrac{5}{96} ( 25 u^2 - 4 uv + 52 v^2), \\
\Reg(P_1,P_2,P_3,P_4,xR_1 + yR_2) & = \tfrac{1}{8} ( 5 x^2 + 8 y^2).
\end{align*}
If $\rho = 29$ then the equation
$\tfrac{5}{96} ( 25 u^2 - 4 uv + 52 v^2) = \tfrac{1}{8} ( 5 x^2 + 8
y^2)$
has a solution in rational numbers $u,v,x,y$ not all zero. However
this quadratic form is not locally soluble over the $3$-adics.
Therefore $\rho = 28$.

\medskip

\noindent
{\bf Case ({\em N},\boldmath$\eps$) = (11,1).}
We already know that $\rho = 28$, $29$ or $30$.
Let $P_1$ and $P_2$ be the reductions mod $p$ of the points in Table~\ref{tabMW}.
In addition, when $p=23$ we have
\begin{align*}
Q_1 &= (16 T^2 + 5 T + 5,21 T^3 + 15 T^2 + 3 T + 18), \\
Q_2 &= (18 T^6 + 5 T^5 + 5 T^4 + 22 T^3 + 9 T^2)/(T+16)^2, \,\, \ldots \,\, ),
\end{align*}
and when $p=53$ we have
\begin{align*}
R_1 &= (28 T^5 + T^4 + 23 T^3 + 40 T^2 + 15 T, \,\, \ldots \,\, ), \\
R_2 &= (49 T^6 + 44 T^5 + 38 T^4)/(T^2 + 42 T + 5)^2,  \,\, \ldots \,\, ).
\end{align*}
Using either Lemma~\ref{lem:Kl} or Lemma~\ref{lem:BSD} we find that 
$\Delta_{23}=  2 \cdot 7 \cdot 11 \cdot 13$ and $\Delta_{53} = 11 \cdot 131$.
Therefore $\rho \le 29$. 

Again, reducing mod $23$ or $53$ does not change the singular fibres.
We compute
\begin{align*}
\Reg(P_1,P_2,uQ_1 + vQ_2) & = \tfrac{11}{480} ( 57 u^2 - 46 uv + 137 v^2), \\
\Reg(P_1,P_2,xR_1 + yR_2) & = \tfrac{1}{240} ( 541 x^2 - 228xy + 1196 y^2).
\end{align*}
If $\rho = 29$ then the equation 
$\tfrac{11}{480} ( 57 u^2 - 46 uv + 137 v^2) = \tfrac{1}{240} ( 541 x^2 - 228xy + 1196 y^2)$
has a solution in rational numbers $u,v,x,y$ not all zero. However
this quadratic form is not locally soluble over the $11$-adics.
Therefore $\rho = 28$.


\begin{thebibliography}{MM}

\frenchspacing
\renewcommand{\baselinestretch}{1}

\bibitem{Magma} W. Bosma, J. Cannon and C. Playoust, The Magma algebra
  system I: The user language, {\em J. Symb. Comb.} {\bf{24}}, 235--265
  (1997). See also 
  \url{http://magma.maths.usyd.edu.au/magma/}

\bibitem{CaL} J.W.S. Cassels, {\em Lectures on elliptic curves}, London
  Mathematical Society Student Texts, {\bf{24}}, Cambridge University
  Press, Cambridge, 1991.

\bibitem{Charles}
F. Charles, On the Picard number of K3 surfaces over number fields,
{\em Algebra Number Theory} {\bf{8}} (2014), no. 1, 1--17.

\bibitem{IChen} 
I. Chen, 
On Siegel's modular curve of level 5 and the class number one problem,
{\em J. Number Theory} {\bf{74}} (1999), no. 2, 278--297.

\bibitem{ChenThesis} Z. Chen, {\em Congruences of elliptic curves},
PhD thesis, University of Cambridge, 2016. \\
\url{http://zc231.user.srcf.net/Maths/PhDThesis.pdf}

\bibitem{Chen8} Z. Chen, Families of elliptic curves with the
    same mod $8$ representation, to appear in 
{\em Math. Proc. Cambridge Philos. Soc.},
\url{https://doi.org/10.1017/S0305004117000354}

\bibitem{g1hess} T.A. Fisher, The Hessian of a genus one curve, {\em
    Proc. Lond. Math. Soc.} (3) {\bf{104}} (2012), no. 3, 613--648.

\bibitem{enqI} T.A. Fisher, Invariant theory for the elliptic normal
  quintic, I. Twists of X(5), {\em Math. Ann.} {\bf{356}} (2013), no. 2,
  589--616.

\bibitem{7and11} T.A. Fisher, On families of 7 and 11-congruent
  elliptic curves, {\em LMS J. Comput. Math.} {\bf{17}} (2014), no. 1,
  536--564.

\bibitem{congr9} T.A. Fisher, On families of 9-congruent elliptic curves,
{\em Acta Arith.} {\bf{171}} (2015), no. 4, 371--387.

\bibitem{Halberstadt}
E. Halberstadt, 
Sur la courbe modulaire $X_{\rm nd\acute{e}p}(11)$,
{\em Experiment. Math.} {\bf{7}} (1998), no. 2, 163--174.

\bibitem{HK} E. Halberstadt and A. Kraus, Sur la courbe modulaire
  $X_E(7)$, {\em Experiment. Math.} {\bf{12}} (2003), no. 1, 27--40.

\bibitem{H} D. Huybrechts, 
{\em Lectures on K3 surfaces},
Cambridge Studies in Advanced Mathematics, {\bf{158}}, 
Cambridge University Press, Cambridge, 2016. 

\bibitem{KS} E. Kani and W. Schanz, Modular diagonal quotient
  surfaces, {\em Math. Z.} {\bf{227}} (1998), no. 2, 337--366.

\bibitem{Kl} R. Kloosterman,  
Elliptic K3 surfaces with geometric Mordell-Weil rank 15,
{\em Canad. Math. Bull.} {\bf{50}} (2007), no. 2, 215--226.

\bibitem{K} A. Kumar, 
Hilbert modular surfaces for square discriminants and elliptic subfields 
of genus~2 function fields,
{\em Res. Math. Sci.} {\bf{2}} (2015), Art. 24, 46 pages.

\bibitem{Miranda} R. Miranda, 
{\em The basic theory of elliptic surfaces},
Dottorato di Ricerca in Matematica, ETS Editrice, Pisa, 1989. 

\bibitem{BN} B. Naskr\c{e}cki, 
Mordell-Weil ranks of families of elliptic curves associated to Pythagorean triples,
{\em Acta Arith.} {\bf{160}} (2013), no. 2, 159--183.

\bibitem{Pap6} I. Papadopoulos, Courbes elliptiques ayant m\^eme
  $6$-torsion qu'une courbe elliptique donn\'ee, {\em J. Number
    Theory} {\bf{79}} (1999), no. 1, 103--114.

\bibitem{PSS} B. Poonen, E.F. Schaefer and M. Stoll, Twists of $X(7)$
  and primitive solutions to $x^2+y^3=z^7$, {\em Duke Math. J.}
  {\bf{137}} (2007), no. 1, 103--158.

\bibitem{RubinSilverberg} K. Rubin and A. Silverberg, Families of
  elliptic curves with constant mod $p$ representations, in {\em
    Elliptic curves, modular forms \& Fermat's last theorem} (Hong
  Kong, 1993), J.~Coates and S.-T.~Yau (eds.), Ser. Number Theory~I,
  Int. Press, Cambridge, MA, (1995) 148--161.

\bibitem{RubinSilverberg6} K. Rubin and A. Silverberg, Mod $6$ representations
  of elliptic curves, in {\em Automorphic forms, automorphic
    representations, and arithmetic} (Fort Worth, TX, 1996),
  R.S.~Doran, Z.-L.~Dou and G.T.~Gilbert (eds.), Proc. Sympos. Pure
  Math., {\bf{66}}, Part 1, Amer. Math. Soc., Providence, RI, (1999),
  213--220.

\bibitem{RubinSilverberg2} K. Rubin and A. Silverberg, Mod $2$
  representations of elliptic curves, {\em Proc. Amer. Math. Soc.}
  {\bf{129}} (2001), no. 1, 53--57.

\bibitem{Shioda}
T. Shioda, On the Mordell-Weil lattices,
{\em Comment. Math. Univ. St. Paul.} {\bf{39}} (1990), no. 2, 211--240.

\bibitem{Silverberg} A. Silverberg, Explicit families of elliptic
  curves with prescribed mod $N$ representations, in {\em Modular
    forms and Fermat's last theorem} (Boston, MA, 1995), G.~Cornell,
  J.H.~Silverman and G.~Stevens (eds.), Springer-Verlag, New York,
  (1997), 447--461.

\bibitem{vL-heron} R. van Luijk, 
An elliptic K3 surface associated to Heron triangles,
{\em J. Number Theory} {\bf{123}} (2007), no. 1, 92--119.

\bibitem{vL1} R. van Luijk, 
K3 surfaces with Picard number one and infinitely many rational points,
{\em Algebra Number Theory} {\bf{1}} (2007), no. 1, 1--15.


\end{thebibliography}
\end{document}